\numberwithin{equation}{section}
\newcounter{hours}\newcounter{minutes}
\newcommand{\hourandminute}{\setcounter{hours}{\time/60}%
\setcounter{minutes}{\time-\value{hours}*60}%
\thehours.\theminutes}
\newcommand{\Versione}{\jobname\ \today\ \hourandminute}% premettere % per non
\newlength{\Indent}
\newlength{\Parskip}
\theoremstyle{plain}
\newtheorem{thm}{Theorem}[section]     %teoremi numerati nella sezione
\newtheorem{prop}[thm]{Proposition}%[section]
\theoremstyle{remark}
\newtheorem{Remark}[thm]{Remark}
\newenvironment{remark}{\begin{Remark}}{\qed\end{Remark}}
\theoremstyle{definition}
\newtheorem{Defin}[thm]{Definition}
\DeclareMathOperator{\Div}{div}
\newcommand{\Iion}{I_{\textrm{ion}}}
\newcommand{\aaa}{a}
\newcommand{\bbb}{b}
\newcommand{\RN}{\Bbb{R}^{N}}
\newcommand{\R}{\Bbb{R}}
\newcommand{\di}{\,\text{\rmfamily\upshape d}}
\newcommand{\Om}{\varOmega}
\newcommand{\CC}{\mathcal{C}}
\def\X{{\mathcal X}}
\def\W{{\mathcal W}}
\newcommand{\dfint}{\sigma_{1}^B}
\newcommand{\dfout}{\sigma_2^B}
\newcommand{\dfdis}{\sigma^{D}}
\newcommand{\capuno}{\alpha}
\newcommand{\capdue}{\beta}
\newcommand{\ubuno}{u^{B}_1}
\newcommand{\ubdue}{u^{B}_2}
\newcommand{\ud}{u^{D}}
\newcommand{\dfboth}{\sigma}
\newcommand{\Omint}{\Om^{D}}
\newcommand{\Omout}{\Om^{B}}
\newcommand{\Memb}{\varGamma}
\newcommand{\Permemb}{\varGamma}
\newcommand{\const}{\gamma}
\newcommand{\testb}{\varphi_B}
\newcommand{\testd}{\varphi_D}
\newcommand{\testduno}{\varphi_D^1}
\newcommand{\testddue}{\varphi_D^2}
\newcommand{\win}{{\textsc{w}_o}}
\newcommand{\upsilonuno}{\upsilon^B_1}
\newcommand{\upsilondue}{\upsilon^B_2}
\newcommand{\upsilond}{\upsilon^D}
\newcommand{\overlineubuno}{\overline u^{B}_1}
\newcommand{\overlineubdue}{\overline u^{B}_2}
\newcommand{\overlineud}{\overline u^{D}}
\begin{document}

\title
{Well-posedness for a modified bidomain model describing bioelectric activity
in damaged heart tissues}
\author{M. Amar$^\dag$ -- D. Andreucci$^\dag$ -- C. Timofte$^\S$\\
\hfill \\
$^\dag$Dipartimento di Scienze di Base e Applicate per l'Ingegneria\\
Sapienza - Universit\`a di Roma\\
Via A. Scarpa 16, 00161 Roma, Italy
\\ \\
$^\S$University of Bucharest\\
Faculty of Physics\\
P.O. Box MG-11, Bucharest, Romania
}

\begin{abstract}
We prove the existence and the uniqueness of a solution for a modified bidomain model, describing the electrical behaviour of the cardiac
tissue in pathological situations. The leading idea is to reduce the problem to an
abstract parabolic setting, which requires to introduce several auxiliary differential systems
and a non-standard bilinear form.
The main difficulties are due to the degeneracy of the bidomain system and to its non-standard coupling with a diffusion equation,
accounting for the presence of the pathological zone in the heart tissue.
\medskip

  \textsc{Keywords:} Existence, uniqueness, abstract parabolic equations, bidomain model, imperfect transmission conditions.

  \textsc{AMS-MSC:} 35K90, 35A01, 35K20, 35Q92
  \bigskip

  \textbf{Acknowledgments}: The first author is member of the \emph{Gruppo Nazionale per l'Analisi Matematica, la Probabilit\`{a} e le loro Applicazioni} (GNAMPA) of the \emph{Istituto Nazionale di Alta Matematica} (INdAM).
The second author is member of the \emph{Gruppo Nazionale per la Fisica Matematica} (GNFM) of the \emph{Istituto Nazionale di Alta Matematica} (INdAM). The last author wishes to thank \emph{Dipartimento di Scienze di Base e Applicate per l'Ingegneria} for the warm hospitality and \emph{Universit\`{a} ``La Sapienza" of Rome} for the financial support.
\end{abstract}

\maketitle

\section{Introduction}\label{s:introduction}
In this paper, we are interested in studying a modified version of the famous {\it bidomain model} (see, e.g.,
\cite{CBetal,KS,NK} and the references therein; see, also, the references quoted in \cite[Introduction]{Collin:Imperiale:2018}),
which is one the most well-known mathematical models in cardiac electrophysiology. This is a topic of major interest in biomedical research.

In the classical bidomain model, at a macroscopic scale, the electric activity of the heart is governed by a system of two degenerate reaction-diffusion partial differential equations for the averaged intra-cellular and, respectively, extra-cellular electric potentials, along with the transmembrane potential, coupled in a nonlinear manner to ordinary differential equations describing the dynamics of the ion channels.
 The well-posedness of the bidomain model has been studied, for different nonlinear ionic models and by using different techniques, by several authors (see, for instance, \cite{BK,Bourgault:Coudiere:Pierre:2009,Collin:Imperiale:2018,Davidovic:2016,Grandelius:2019,KLPW,Nagumo:Arimoto:Yoshizawa:1962,
 Pennacchio:Savare:Franzone:2005,Veneroni:2009}).

The bidomain model is suitable for describing the propagation of the action potential in a perfectly healthy cardiac tissue, but it is no longer valid (even if one tries to {\it ad-hoc} modify some of its relevant modeling parameters) in pathological situations.
Models, taking into account the presence in the cardiac tissue of damaged zones, called {\it diffusive inclusions} and assumed to be passive electrical conductors, were proposed in \cite{Boulakia:Fernandez:Gerbeau:Zemzemi:2008,
CDP1,CDP2,Davidovic:2016}.

From a mathematical point of view, such models consist in a bidomain system coupled with a diffusion equation. More precisely,
one has a degenerate reaction-diffusion
system of partial differential equations modeling the intra-cellular and, respectively, the extra-cellular electric potentials of the healthy cardiac tissue, coupled with an elliptic equation for the passive regions and with an ordinary differential equation describing the cellular membrane dynamics.

We point out that in all the above mentioned papers a perfect electrical coupling between the healthy part of the heart and the damaged tissue was assumed.
More general conditions for the heart-torso coupling were proposed in \cite{Boulakia-2007-1}
and investigated through numerical simulations in \cite{Boulakia:2015,Boulakia-2010,Zemzemi:2009}, in order to take into account the possible capacitive and resistive effects of the pericardium.
%surface of the diffusive inclusions.
However, up to our knowledge, there are no rigorous proofs in the literature covering this setting.
We investigate these more general conditions in the context of the bidomain model with diffusive inclusions, where
the appropriate interface behaviour, up to our knowledge, is still not well understood.

The goal of the present paper is to study the well-posedness of such a {\it modified bidomain model}.
We include the structural defects of the heart tissue
in this model by coupling a standard bidomain system in the healthy zone with a diffusion equation posed in the damaged part of the heart,
through non-standard conditions (see equations \eqref{eq:PDEin}--\eqref{eq:Circuit}).
More precisely, for the intra-cellular potential we assume no flux condition on the interface between the two zones
(see \eqref{eq:flux1}), while the extra-cellular
potential is coupled with the electrical potential of the damaged zone through imperfect transmission conditions,
involving the resistive and the capacitive properties of the interface (see \eqref{eq:flux2} and \eqref{eq:Circuit}).

In order to describe the dynamic of the membrane, one can use a physiological ionic model or a phenomenological one (see, for instance, \cite{Collin:Imperiale:2018}). In this paper, the dynamic of the gating variable modeling the ionic transport through the cell membrane is described with the aid of a Hodgkin-Huxley type formalism (see \eqref{eq:gating3}--\eqref{eq:gating2}). Our analysis covers also the modified Mitchell-Schaeffer formalism proposed in \cite{CDP2} (see Remark \ref{r:r2}).

We point out again that
our mathematical model generalizes the modified bidomain model with diffusive inclusions and perfect transmission conditions considered in \cite{CDP2,Davidovic:2016},
the original model being recovered by suitably rearranging the
parameters appearing in equation \eqref{eq:Circuit}. We believe that further numerical simulations have to be carried
out in order to validate the relevance of such transmission conditions also from the point of view of possible biological applications.

The mathematical problem we address here is rather non-standard and, up to our knowledge, the proof of its well-posedness is new in the literature
and generates difficulties due to the degeneracy of the bidomain system and to its special coupling with the diffusion equation.

Our main result is contained in Theorems \ref{p:p2} and \ref{t:t1}, where the leading idea is to reduce the problem to an
abstract parabolic setting (see \cite{Cazenave:Haraux:1998,Pazy:2012}). This requires to introduce several auxiliary differential systems
and a non-standard bilinear form (see Proposition \ref{p:p4}).

The problem proposed here can be seen as a mesoscopic model which will be analyzed in the homogenization limit
in a forthcoming article (see \cite{Amar:Andreucci:Timofte:2020}).
\medskip

The paper is organized as follows: in Section 2, we introduce the mathematical description of our modified bidomain model, together
with its geometrical and functional setting. In Section 3, we state and prove our main result.
\\

\section{The model}\label{s:threeD_problem}

\subsection{Geometrical setting}\label{ss:geometric}
Let $N\geq 3$.
Let $\Om$ be an open connected bounded subset of $\RN$; we assume that $\partial\Om$ is of class $\CC^\infty$,
though this assumption can be weakened. Moreover, for $T>0$, we set $\Om_T=\Om\times(0,T)$.
We assume that $\Om=\Omint\cup\Omout\cup\Memb$, where $\Omint$ and $\Omout$ are
two disjoint open subsets of $\Om$ and
$\Memb=\partial\Omint\cap\Om=\partial\Omout\cap\Om$.
The domain $\Omega$ is occupied by the cardiac tissue, $\Omega^B$ represents the healthy part of the heart tissue, modeled with the aid of a standard bidomain system, $\Omega^D$ represents the diffusive region, accounting for the damaged part of the heart, and $\Gamma$ is the common boundary of these two regions, assumed to be Lipschitz. From a geometrical point of view, we assume that $\Omega^B$ is connected, while $\Omega^D$ might be connected or disconnected.
Indeed, we will consider two different cases: in the first one (to which we will
refer as the {\it connected/disconnected case}, see Fig.\ref{fig:mb} on the left),
we will assume $\Omint\subset\subset\Om$ and $\Omint$ is made by a finite number of connected components.
In this case, $\Memb=\partial\Omint$ and $\partial\Omout\cap\partial\Om\not=\emptyset$.
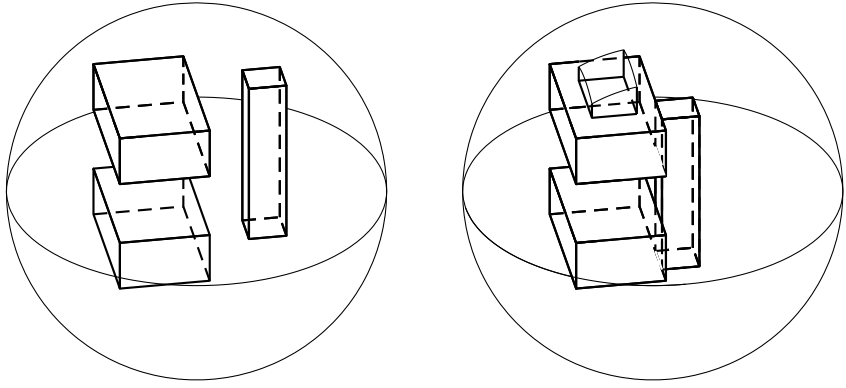
\begin{figure}[htbp]%
\begin{center}%
\begin{pspicture}*(12,6)
  \def\ALPHA{10 }
  \def\BETA{30 }
  \def\RADIUS{2.5 }
  \def\AMAX{3.3 }
  \psset{nameX=$x_{1}$,nameY=$x_{2}$,nameZ=$x_{3}$,Alpha=\ALPHA,Beta=\BETA,xMin=-\AMAX,xMax=\AMAX,yMin=-\AMAX,yMax=\AMAX,zMin=-0.5,zMax=\AMAX,drawing=false}
  \rput(3,3){%
    \pstThreeDCoor[linecolor=black,]
    %% sphere
    % proiezione
    \pstThreeDCircle[linewidth=0.4pt,beginAngle=0,endAngle=360](0,0,0)(\RADIUS \BETA sin mul \ALPHA sin mul neg,\RADIUS \BETA sin mul \ALPHA cos mul neg,\RADIUS \BETA cos mul)(\ALPHA cos,\ALPHA sin neg,0)
    % equator
    \pstThreeDCircle[linewidth=0.4pt](0,0,0)(\RADIUS,0,0)(0,\RADIUS,0)
    %% boxes
%    \pstThreeDBox[hiddenLine](-.5,-.5,-.8)(.5,0,0)(0,.5,0)(0,0,2.3)
    \pstThreeDBox[hiddenLine,fillstyle=solid](-1.2,-.5,-.8)(.5,0,0)(0,.5,0)(0,0,2.3)
    \pstThreeDBox[hiddenLine](-1.2,-.5,-.8)(.5,0,0)(0,.5,0)(0,0,2.3)
    \pstThreeDBox[hiddenLine](0,-1,-.8)(1.2,0,0)(0,2,0)(0,0,.7)
    \pstThreeDBox[hiddenLine,fillstyle=solid](0,-1,.8)(1.2,0,0)(0,2,0)(0,0,.7)
    \pstThreeDBox[hiddenLine](0,-1,.8)(1.2,0,0)(0,2,0)(0,0,.7)
%    \pstThreeDBox[hiddenLine](-.5,-.5,-.8)(.5,0,0)(0,.5,0)(0,0,1.5)
  }
  \rput(9,3){%
    \pstThreeDCoor[linecolor=black,]
    %% sphere
    % proiezione
    \pstThreeDCircle[linewidth=0.4pt,beginAngle=0,endAngle=360](0,0,0)(\RADIUS \BETA sin mul \ALPHA sin mul neg,\RADIUS \BETA sin mul \ALPHA cos mul neg,\RADIUS \BETA cos mul)(\ALPHA cos,\ALPHA sin neg,0)
    % equator
    \pstThreeDCircle[linewidth=0.4pt](0,0,0)(\RADIUS,0,0)(0,\RADIUS,0)
    %% boxes
    % tall
    \pstThreeDBox[hiddenLine,fillstyle=solid](-.5,.2,-.8)(.5,0,0)(0,.5,0)(0,0,2.3)
    \pstThreeDBox[hiddenLine](-.5,.2,-.8)(.5,0,0)(0,.5,0)(0,0,2.3)
    % \pstThreeDBox[hiddenLine,fillstyle=solid](-1.2,-.5,-.8)(.5,0,0)(0,.5,0)(0,0,2.3)
    % \pstThreeDBox[hiddenLine](-1.2,-.5,-.8)(.5,0,0)(0,.5,0)(0,0,2.3)
    % flat
    \pstThreeDBox[hiddenLine,fillstyle=solid](0,-1,-.8)(1.2,0,0)(0,2,0)(0,0,.7)
    \pstThreeDBox[hiddenLine](0,-1,-.8)(1.2,0,0)(0,2,0)(0,0,.7)
    % paint again half equator
    \pstThreeDCircle[linewidth=0.4pt,beginAngle=0,endAngle=90](0,0,0)(\RADIUS,0,0)(0,\RADIUS,0)
    \pstThreeDBox[hiddenLine,fillstyle=solid](0,-1,.8)(1.2,0,0)(0,2,0)(0,0,.7)
    \pstThreeDBox[hiddenLine](0,-1,.8)(1.2,0,0)(0,2,0)(0,0,.7)
    % paint again tall
    \pstThreeDBox[hiddenLine,linestyle=dashed](-.5,.2,-.8)(.5,0,0)(0,.5,0)(0,0,2.3)
    \pstThreeDLine[linecolor=white](0,.2,-.8)(0,.7,-.8)
    \pstThreeDLine[linecolor=white](0,.2,.8)(0,.7,.8)
    \pstThreeDSquare[linestyle=none,fillstyle=solid](0,1,.8)(.6,0,0)(0,-.8,0)
    \pstThreeDLine(1.2,1,.8)(0,1,.8)
    % way out
    % first clean background
    % \pstThreeDBox[linestyle=none,fillstyle=solid](.3,-.5,1.5)(.6,0,0)(0,1,0)(0,0,.4)
    \pstThreeDSquare[fillstyle=solid,linestyle=none](.9,-.5,1.5)(-.6,0,0)(0,0,.5)
    \pstThreeDSquare[fillstyle=solid,linestyle=none](.3,-.5,1.5)(.6,0,0)(0,1,0)
    \pstThreeDLine(.3,-.5,1.5)(.9,-.5,1.5)(.9,.5,1.5)(.3,.5,1.5)
    \pstThreeDLine[linestyle=dashed](.3,.5,1.5)(.3,-.5,1.5)
    \pstThreeDLine[linestyle=dashed](1.2,-1,.8)(0,-1,.8)
    \pstThreeDLine(.9,-.5,1.5)(.9,-.5,1.71464)
    \pstThreeDLine(.3,-.5,1.5)(.3,-.5,1.91311)
    \pstThreeDLine(.9,.5,1.5)(.9,.5,1.71464)
    \pstThreeDLine(.3,.5,1.5)(.3,.5,1.91311)
    \parametricplotThreeD[linewidth=.4 pt](.3,.9){t .5 neg 4 .25 sub t 2 exp sub .5 exp}
    \parametricplotThreeD[linewidth=.4 pt](-.5,.5){.3 t 4 .09 sub t 2 exp sub .5 exp}
    \parametricplotThreeD[linewidth=.4 pt](.3,.9){t .5 4 .25 sub t 2 exp sub .5 exp} 
    \parametricplotThreeD[linewidth=.4 pt](-.5,.5){.9 t 4 .81 sub t 2 exp sub .5 exp}
    % to get dashed behind way out
    \pstThreeDLine[linestyle=dashed](.8,-1,1.5)(0,-1,1.5)
    \pstThreeDLine(1.2,-1,1.5)(.8,-1,1.5)
%    \pstThreeDBox[hiddenLine](-.5,-.5,-.8)(.5,0,0)(0,.5,0)(0,0,1.5)
  }

\end{pspicture}

%%% Local Variables:
%%% mode: latex
%%% TeX-master: "figure"
%%% End:%
    \caption{\textsl{On the left:} the connected/disconnected case. \textsl{On the right:} the connected/connected case.}
    \label{fig:mb}
  \end{center}
\end{figure}

In the second case (to which we will refer as the {\it connected/connected case},
see Fig.\ref{fig:mb} on the right), we will
assume that both $\Omint$ and $\Omout$ are connected, with $\partial\Omout\cap\partial\Om\not=\emptyset$ and
$\partial\Omint\cap\partial\Om\not=\emptyset$.
Finally, let $\nu$ denote the normal unit vector to $\Memb$ pointing into $\Omout$.

In the following, by $\const$ we shall denote a strictly positive constant,
which may depend on the geometry and on the other parameters of the problem; $\const$ may vary from line to line.
\medskip

\subsection{Functional spaces}\label{ss:spaces}
Let us introduce the following functional spaces:
\begin{equation}
\label{eq:space1}
\begin{aligned}
& H^1_{null}(\Omout):= \{w\in H^1(\Omout): \hbox{$w=0$ on $\partial\Omout\cap\partial\Om$, in the sense of traces}\};
\\
& H^1_{null}(\Omint):= \{w\in H^1(\Omint): \hbox{$w=0$ on $\partial\Omint\cap\partial\Om$, in the sense of traces}\};
\\
& H^{1/2}_0(\Memb,\Om):= \{r\in H^{1/2}(\Memb): \hbox{$r=\widetilde r_{\mid_{\Memb}}$, with $\widetilde r\in H^1_0(\Om)$}\}.
\end{aligned}
\end{equation}
Notice that $H^{1/2}_0(\Memb,\Om)$ is a Hilbert space and, in the connected/disconnected case, $H^{1/2}_0(\Memb,\Om)=H^{1/2}(\Memb)$
and $H^1_{null}(\Omint)=H^1(\Omint)$.

We also set
\begin{equation}\label{eq:space2}
W: = H^1_{null}(\Omout)\times H^{1/2}_0(\Memb,\Om);
\qquad   H:= L^2(\Omout)\times L^2(\Memb),
\end{equation}
where $H$ is endowed with the scalar product
\begin{equation}\label{eq:scalar1}
\big((w,r),(\overline w,s)\big)_{H}
:=\int_{\Omout}w\overline w\di x+\alpha\int_{\Memb}rs\di\sigma\,;
\end{equation}
here, $\alpha>0$ will be the constant appearing later in \eqref{eq:Circuit}
and $W$ is endowed with the scalar product
\begin{equation}\label{eq:scalar2}
\big((w,r),(\overline w,s)\big)_{W}
:=\int_{\Omout}\nabla w\cdot\nabla\overline w\di x+(r,s)_{1/2}\,.
\end{equation}
We denote by $(\cdot,\cdot)_{1/2}$ the standard scalar product on $H^{1/2}(\Memb)$.

Moreover, we define the space
\begin{equation}\label{eq:a1}
\X^1_0(\Om):=\{\W:\Om\to \R\ :\ \W_{\mid_{\Omout}}\in H^1_{null}(\Omout),\
\W_{\mid_{\Omint}}\in H^1_{null}(\Omint)\}\,,
\end{equation}
endowed with the norm
\begin{equation}\label{eq:a3}
\Vert \W\Vert^2_{\X^1_0(\Om)}:= \Vert \nabla\W\Vert^2_{L^2(\Omout)}+\Vert\W\Vert^2_{H^1(\Omint)}.
\end{equation}
We recall that $\partial\Omout\cap\partial\Om$ is always non-empty, while $\partial\Omint$
can intersect or not the boundary of $\Om$, depending on the geometry.
For $\W\in \X^1_0(\Om)$, we have the following Poincar\'{e} inequality
(see \cite[Proposition 2]{Amar:Andreucci:Bisegna:Gianni:2005}):
\begin{equation}\label{eq:poincare1}
\Vert \W\Vert^2_{L^2(\Om)}\leq \const \left(\Vert \nabla\W\Vert^2_{L^2(\Omout)}+\Vert \nabla\W\Vert^2_{L^2(\Omint)}
+\Vert[\W]\Vert^2_{L^2(\Memb)}\right)\,,
\end{equation}
where $[\W]= \W_{\mid_{\Omout}}- \W_{\mid_{\Omint}}$ and
the last term is not necessary in the connected/connected case.
Therefore, an equivalent norm on $\X^1_0(\Om)$ is given by
\begin{equation}\label{eq:a7}
\Vert \W\Vert^2_{\X^1_0(\Om)}\sim \Vert \nabla\W\Vert^2_{L^2(\Om)}+\Vert[\W]\Vert^2_{L^2(\Memb)};
\end{equation}
again, the last term can be dropped in the connected/connected case.

\subsection{Position of the problem}\label{ss:position}

Let $\capuno,\capdue$ be strictly positive constants and
$\dfint,\dfout,\dfdis$ be measurable functions such that
$\const_0\leq \dfint(x),\dfout(x),\dfdis(x)\leq \widetilde\const_0$, a.e. in $\Om$, for
suitable strictly positive constants $\const_0,\widetilde\const_0$.
The assumption that $\dfint,\dfout,\dfdis$ are scalar functions is used only in Section \ref{s:exist}.
Removing this assumption is not trivial. If we want to consider general bounded and symmetric matrices satisfying
\begin{equation}\label{eq:matrix}
\begin{aligned}
&\const_0|\zeta|^2\leq \dfint(x) \zeta\cdot\zeta\leq\widetilde\const_0|\zeta|^2,\qquad\text{for every
$\zeta\in\RN$ and a.e. $x\in \Omout$;}
\\
&\const_0|\zeta|^2\leq \dfout(x) \zeta\cdot\zeta\leq\widetilde\const_0|\zeta|^2,\qquad\text{for every
$\zeta\in\RN$ and a.e. $x\in \Omout$;}
\\
&\const_0|\zeta|^2\leq \dfdis(x) \zeta\cdot\zeta\leq\widetilde\const_0|\zeta|^2,\qquad\text{for every
$\zeta\in\RN$ and a.e. $x\in\Omint$,}
\end{aligned}
\end{equation}
we have to require some other structural hypotheses as in \cite[Lemma 1]{Bourgault:Coudiere:Pierre:2009}
and \cite[Formula (1)]{K2020} (see, also, \cite{BK,Boulakia:2015,Davidovic:2016}).
\medskip

Let us consider a locally Lipschitz continuous function $g:\R^2\to\R$, such that $g(p,1)\geq 0$ and $g(p,0)\leq 0$. The example
we have in mind here is a function of the form
\begin{equation}\label{eq:gating3}
g(p,q)= \aaa(p)( q-1) +\bbb(p)q,
\end{equation}
where $\aaa,\bbb:\R\to\R$ are positive, bounded and Lipschitz functions. Notice that the form of $g$ in \eqref{eq:gating3} is classical
in this framework (see, for instance, \cite{Veneroni:2009}) and that $g$ is Lipschitz continuous with
respect to $p$ and affine with respect to $q$.
Let $\Iion:\R^2\to\R$ be given by
\begin{equation}\label{eq:ion3}
I_{\textrm{ion}}(p,q)= h_1(p) +h_2(p)q,
\end{equation}
where $h_1,h_2$ are Lipschitz continuous functions and $h_2$ is bounded.
Let $\win\in L^\infty(\Omout)$, with $0\leq\win(x)\leq 1$ a.e. in $\Omout$, and
$p\in L^2(\Omout_T)$.
Consider
the gating equation
\begin{alignat}2
\label{eq:gating1}
&\partial_t \widetilde w_p+g(p,\widetilde w_p)=0,\qquad &\text{in $\Omout_T$;}
\\
  \label{eq:gating2}
& \widetilde w_p(x,0)=\win(x),\qquad &\text{in $\Omout$.}
\end{alignat}
Notice that, by classical results, the previous problem admits a unique solution $\widetilde w_p\in H^1(0,T;L^\infty(\Omout))$
and, from our assumptions, $0\leq\widetilde w_p(x,t)\leq 1$ a.e. in $\Omout_T$, since
$0\leq\win(x)\leq 1$ a.e. in $\Omout$. This is a standard result for ODEs, taking into account that the spatial variable plays
here only the role of a parameter (for similar results, see, for instance, \cite{Collin:Imperiale:2018,CDP2,JPr}).

Moreover, from the previous assumptions, we can prove that
there exists a strictly positive constant $\const$ such that
\begin{equation}
\label{eq:ion1}
\big\Vert \Iion(p_1,\widetilde w_{p_1})-\Iion(p_2,\widetilde w_{p_2})\big\Vert_{L^2(\Omout_T)}\leq \const_I\Vert p_1-p_2\Vert_{L^2(\Omout_T)}\,,
\end{equation}
due to the Lipschitz dependence of $\widetilde w_p$ on $p$ and to the bound $0\leq\widetilde w_p(x,t)\leq 1$ a.e. in $\Omout_T$.

We give here a complete formulation of the problem we shall address in this paper.
The operators $\Div$ and $\nabla$ act
only with respect to the space variable $x$.

Let $f_1,f_2\in H^1(\Omout_T)$, $\overline v_{0}\in L^2(\Omout)$, $s_0\in L^2(\Memb)$ and
consider the problem
for $\ubuno,\ubdue\in L^2(0,T;H^1_{null}(\Omout)), \ud\in L^2(0,T;H^1_{null}(\Omint))$ and
$\widetilde w\in H^1(0,T;L^\infty(\Omout))$
given by
\begin{alignat}2
  \label{eq:PDEin}
&\frac{\partial}{\partial t}(\ubuno-\ubdue)-\Div(\dfint \nabla \ubuno)+\Iion(\ubuno-\ubdue,\widetilde w)\!=\!f_1,\ &\text{in $\Omout_T$;}
  \\
  \label{eq:PDEout}
&\frac{\partial}{\partial t}(\ubuno-\ubdue)+\Div(\dfout \nabla \ubdue)+\Iion(\ubuno-\ubdue,\widetilde w)\!=\!f_2,\ &\text{in $\Omout_T$;}
  \\
  \label{eq:PDEdis}
&-\Div(\dfdis \nabla \ud)=0,&\text{in $\Omint_T$;}
\\
\label{eq:flux1}
&  \dfint\nabla\ubuno\cdot\nu=0,&\text{on $\Memb_T$;}
\\
\label{eq:flux2}
&  \dfout\nabla\ubdue\cdot\nu =\dfdis\nabla\ud\cdot\nu,&\text{on $\Memb_T$;}
\\
  \label{eq:Circuit}
&  \capuno\frac{\partial}{\partial t}(\ubdue-\ud)+\capdue  (\ubdue-\ud)=\dfout\nabla \ubdue\cdot\nu,
&\text{on $\Memb_T$;}
  \\
  \label{eq:BoundData}
&  \ubuno(x,t),\ubdue(x,t),\ud(x,t)=0,&\text{on $\partial\Om\times(0,T)$;}
  \\
  \label{eq:InitData1}
& \ubuno(x,0)-\ubdue(x,0)=\overline v_{0}(x),&\text{in $\Omout$;}
\\
  \label{eq:InitData3}
& \ubdue(x,0)-\ud(x,0)=s_{0}(x),&\text{on $\Memb$,}
\end{alignat}
where $\widetilde w$ is the solution of the gating equation \eqref{eq:gating1}, \eqref{eq:gating2},
with $p=\ubuno-\ubdue$.

\begin{remark}[Biological interpretation]\label{r:r3}
In the previous system of equations, the coefficients $\dfint,\dfout$ and $\dfdis$ are
the conductivities of the two healthy phases and of the damaged one, respectively,
while $\capuno$ and $\capdue$ are given parameters related to the capacitive and the resistive behaviour of the interface $\Memb$.
The functions $f_1$ and $f_2$, appearing in \eqref{eq:PDEin} and \eqref{eq:PDEout}, respectively,
represent the internal and the external current stimulus.
The solutions $\ubuno$ and $\ubdue$ are the intra and the extra-cellular potentials of the healthy zone, while
$\ud$ is the electrical potential of the damaged zone. The function $\ubdue-\ubuno$ is the so-called {\it transmembrane potential}.
Finally, the variable $\widetilde w$, called the {\it gating variable}, describes the ionic transport through the cell membrane.
The terms $g$ and $I_{ion}$ are nonlinear functions, modeling the membrane ionic currents.
For simplicity, we consider only one gating variable, but our results hold true also for the case in which the gating variable is vector valued.
\end{remark}

\begin{remark}\label{r:r4}
In equation \eqref{eq:flux1}, we are assuming that the intra-cellular potential satisfies a no flux condition
on the interface $\Memb_T$ between the two zones $\Omout$ and $\Omint$,
while equations \eqref{eq:flux2} and \eqref{eq:Circuit} state the imperfect
transmission conditions coupling the extra-cellular potential with the electrical potential of the damaged zone $\Omint$. Such conditions
involve the capacitive and the resistive properties of the interface, represented by the constants $\alpha$ and $\beta$, respectively.
\end{remark}

\begin{remark}\label{r:r2}
Different examples of functions $\Iion$ and $g$ are considered in the literature. We consider here
a Hodgkin-Huxley type model (see \eqref{eq:gating3}-\eqref{eq:ion3}), as in \cite{Amar:Andreucci:Timofte:2020,Collin:Imperiale:2018,Veneroni:2009}.
However, we point out that the results obtained in this paper are also valid for a regularized version of the Mitchell-Schaeffer model proposed in
\cite{CDP2} (see, also, \cite{CDP1,Davidovic:2016,KLPW}).
\end{remark}

By standard approximation procedure,
multiplying \eqref{eq:PDEin} by $\ubuno$, \eqref{eq:PDEout} by $\ubdue$, \eqref{eq:PDEdis} by $\ud$,
subtracting \eqref{eq:PDEout} from \eqref{eq:PDEin}, adding \eqref{eq:PDEdis},
integrating by parts, using \eqref{eq:flux1}--\eqref{eq:InitData3}, \eqref{eq:ion3}, \eqref{eq:ion1} and
moving the integral containing $\Iion$ to the right-hand side, we get
\begin{multline}\label{eq:energy1}
\frac{1}{2}\int_{\Omout} (\ubuno-\ubdue)^2(x,T)\di x +\int_{\Omout_T}\dfint\vert\nabla\ubuno\vert^2\di x\di t
+\int_{\Omout_T}\dfout\vert\nabla \ubdue\vert^2\di x\di t
\\
+\int_{\Omint_T}\dfdis\vert\nabla \ud\vert^2\di x\di t
+\frac{\capuno}{2} \int_{\Memb}(\ubdue-\ud)^2(x,T)\di \sigma(x)
+\capdue\int_{\Memb_T} (\ubdue-\ud)^2(x,t)\di \sigma(x)\di t
\\
=
\int_{\Omout_T}(f_1\ubuno-f_2\ubdue)\di x\di t+ \frac{1}{2}\int_{\Omout} \overline v_0^2(x)\di x
+\frac{\capuno}{2} \int_{\Memb}s_0^2(x)\di \sigma(x)
\\
-\int_{\Omout_T}\Iion(\ubuno-\ubdue,\widetilde w)(\ubuno-\ubdue)\di x\di t
\\
\leq
\int_{\Omout_T}(f_1\ubuno-f_2\ubdue)\di x\di t+ \const(\Vert \overline v_0\Vert^2_{L^2(\Omout)}
+\Vert s_0\Vert^2_{L^2(\Memb)})
\\
-\int_{\Omout_T}\Big(\Iion(\ubuno-\ubdue,\widetilde w)-\Iion(0,w_0)\Big)(\ubuno-\ubdue)\di x\di t
\\
-\int_{\Omout_T}\Iion(0,w_0)(\ubuno-\ubdue)\di x\di t
\\
\leq \const(\Vert f_1\Vert^2_{L^2(\Omout_T)}+\Vert f_2\Vert^2_{L^2(\Omout_T)}+\Vert \overline v_0\Vert^2_{L^2(\Omout)}
+\Vert s_0\Vert^2_{L^2(\Memb)}+1)
\\
+\frac{\delta}{2} (\Vert \nabla \ubuno\Vert^2_{L^2(\Omout_T)}
+\Vert \nabla \ubdue\Vert^2_{L^2(\Omout_T)})
+\const\Vert \ubuno- \ubdue\Vert^2_{L^2(\Omout_T)})\,,
\end{multline}
where $\const$ and $\delta$ are positive constants,
$\delta$ can be chosen smaller than $\min(\dfint,\dfout)$,
and we have also applied Poincar\'{e} inequality to $\ubuno$ and $\ubdue$.
By absorbing into the left-hand side the first two terms in the last line of \eqref{eq:energy1}
and using Gronwall inequality, from the previous estimate, we obtain
\begin{multline}\label{eq:energy5}
\sup_{t\in(0,T)}\int_{\Omout} (\ubuno-\ubdue)^2(x,t)\di x +\int_{\Omout_T}\vert\nabla\ubuno\vert^2\di x\di t
+\int_{\Omout_T}\vert\nabla \ubdue\vert^2\di x\di t
+\int_{\Omint_T}\vert\nabla \ud\vert^2\di x\di t
\\
+\sup_{t\in(0,T)}\int_{\Memb}(\ubdue-\ud)^2(x,t)\di \sigma(x)
+\int_{\Memb_T} (\ubdue-\ud)^2(x,t)\di \sigma(x)\di t
\\
\leq \const(\Vert f_1\Vert^2_{L^2(\Omout_T)}+\Vert f_2\Vert^2_{L^2(\Omout_T)}+\Vert \overline v_0\Vert^2_{L^2(\Omout)}
+\Vert s_0\Vert^2_{L^2(\Memb)}+1)\,.
\end{multline}

\begin{prop}
Under the assumptions stated above, problem \eqref{eq:PDEin}--\eqref{eq:InitData3} and \eqref{eq:gating1}-\eqref{eq:gating2}
admits at most one solution.
\end{prop}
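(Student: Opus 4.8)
The plan is to run the classical energy-plus-Gronwall argument for parabolic problems, exploiting the fact that \eqref{eq:PDEin}--\eqref{eq:Circuit} are linear in the unknowns except for the ionic term, and that the latter is controlled, through \eqref{eq:ion1}, by the transmembrane potential alone. Given two solutions $(\ubuno^{(i)},\ubdue^{(i)},\ud^{(i)},\widetilde w^{(i)})$, $i=1,2$, with the same data, I would set $U_1=\ubuno^{(1)}-\ubuno^{(2)}$, $U_2=\ubdue^{(1)}-\ubdue^{(2)}$, $V=\ud^{(1)}-\ud^{(2)}$, $P=U_1-U_2$, and
\[
R:=\Iion\big(\ubuno^{(1)}-\ubdue^{(1)},\widetilde w^{(1)}\big)-\Iion\big(\ubuno^{(2)}-\ubdue^{(2)},\widetilde w^{(2)}\big)\,,
\]
where $\widetilde w^{(i)}$ solves \eqref{eq:gating1}--\eqref{eq:gating2} with $p=\ubuno^{(i)}-\ubdue^{(i)}$. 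By linearity and by the homogeneity of \eqref{eq:BoundData}--\eqref{eq:InitData3} for the differences, the quadruple $(U_1,U_2,V,\widetilde w^{(1)}-\widetilde w^{(2)})$ solves \eqref{eq:PDEin}--\eqref{eq:InitData3} with $f_1=f_2=0$, $\overline v_0=0$, $s_0=0$, and with $\Iion(\ubuno-\ubdue,\widetilde w)$ replaced by $R$.

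The core of the argument is to repeat, over the truncated cylinder $\Omout\times(0,t)$ for each $t\in(0,T]$, exactly the computation that produced \eqref{eq:energy1}: multiply the difference equations \eqref{eq:PDEin}, \eqref{eq:PDEout}, \eqref{eq:PDEdis} by $U_1$, $U_2$, $V$ respectively, subtract the second from the first, add the third, and integrate by parts using the homogeneous interface and boundary conditions \eqref{eq:flux1}--\eqref{eq:InitData3} (legitimate after the same regularization in time already used for \eqref{eq:energy1}). Since the reaction enters only via $P=U_1-U_2$, I expect to obtain, writing $\Omout_t=\Omout\times(0,t)$, $\Omint_t=\Omint\times(0,t)$, $\Memb_t=\Memb\times(0,t)$, the identity
\begin{multline*}
\tfrac12\int_{\Omout}P^2(x,t)\di x+\int_{\Omout_t}\dfint|\nabla U_1|^2\di x\di s+\int_{\Omout_t}\dfout|\nabla U_2|^2\di x\di s+\int_{\Omint_t}\dfdis|\nabla V|^2\di x\di s\\
+\tfrac{\capuno}{2}\int_{\Memb}(U_2-V)^2(x,t)\di\sigma(x)+\capdue\int_{\Memb_t}(U_2-V)^2\di\sigma(x)\di s=-\int_{\Omout_t}R\,P\di x\di s\,.
\end{multline*}
Next I would use that the gating problem \eqref{eq:gating1}--\eqref{eq:gating2} is causal — $\widetilde w_p(\cdot,s)$ depends only on the restriction of $p$ to $\Omout\times(0,s)$ — so that the proof of \eqref{eq:ion1} in fact gives $\Vert R\Vert_{L^2(\Omout_t)}\le\const_I\Vert P\Vert_{L^2(\Omout_t)}$ for every $t$. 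Estimating the right-hand side above by Cauchy--Schwarz and dropping the nonnegative diffusion and interface terms, one is left with $\int_{\Omout}P^2(x,t)\di x\le\const\int_0^t\int_{\Omout}P^2(x,s)\di x\di s$ for all $t\in(0,T]$, whence $P\equiv 0$ on $\Omout_T$ by Gronwall's lemma.

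To conclude I would feed $P\equiv0$ (hence $R\equiv0$) back into the energy identity: every term on the left then vanishes, so $\nabla U_1=\nabla U_2=0$ a.e. in $\Omout_T$, $\nabla V=0$ a.e. in $\Omint_T$, and $U_2=V$ a.e. on $\Memb_T$. Since $\Omout$ is connected and $U_1,U_2$ vanish (in the trace sense) on $\partial\Omout\cap\partial\Om\neq\emptyset$, this forces $U_1\equiv U_2\equiv 0$; then $V$ is spatially constant on each connected component of $\Omint$ and equals $U_2=0$ on $\Memb$, so $V\equiv 0$ (in the connected/disconnected case because $\Memb=\partial\Omint$; in the connected/connected case because $\Omint$ is connected and $V=0$ on $\partial\Omint\cap\partial\Om\neq\emptyset$). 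Finally $\ubuno^{(1)}-\ubdue^{(1)}=\ubuno^{(2)}-\ubdue^{(2)}$, so the two gating variables solve the same Cauchy problem \eqref{eq:gating1}--\eqref{eq:gating2} and coincide by uniqueness for ODEs. I expect the only genuinely delicate point to be the degeneracy of the system stressed in the Introduction: the time derivative controls only the combinations $\ubuno-\ubdue$ on $\Omout$ and $\ubdue-\ud$ on $\Memb$, never the three potentials separately, so that disentangling $U_1$, $U_2$ and $V$ at the very end really requires the gradient bounds together with the homogeneous Dirichlet condition \eqref{eq:BoundData}.
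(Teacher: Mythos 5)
Your proposal is correct and follows essentially the same route as the paper: take the difference of two solutions, run the energy computation of \eqref{eq:energy1} on the homogeneous difference system, control the ionic term through the Lipschitz estimate \eqref{eq:ion1}, and conclude by Gronwall. You merely spell out two points the paper leaves implicit --- the time-truncated (causal) form of \eqref{eq:ion1} needed to apply Gronwall, and the final recovery of $\upsilonuno=\upsilondue=\upsilond=0$ from the vanishing gradient and interface terms together with the homogeneous Dirichlet data --- which is a welcome but not substantively different elaboration.
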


\begin{proof}
Assume that $(\ubuno,\ubdue, \ud,\widetilde w)$ and $(\overlineubuno,\overlineubdue,
\overlineud,\overline{\widetilde w})$ are two different solutions of problem
\eqref{eq:PDEin}-\eqref{eq:InitData1}, with $\overline{\widetilde w}$ being the solution of \eqref{eq:gating1}-\eqref{eq:gating2},
corresponding to $p=\overlineubuno-\overlineubdue$. Setting $\upsilonuno:=\ubuno-\overlineubuno,
\upsilondue:=\ubdue-\overlineubdue$ and $ \upsilond:=\ud-\overlineud$,
we obtain that $\upsilonuno,\upsilondue, \upsilond,\widetilde w$ and $\overline{\widetilde w}$ solve the system
\begin{equation*}
\begin{aligned}
&\frac{\partial}{\partial t}(\upsilonuno-\upsilondue)-\Div(\dfint \nabla \upsilonuno)+\Iion(\ubuno-\ubdue,\widetilde w)
-\Iion(\overlineubuno-\overlineubdue,\overline{\widetilde w})\!=\!0,\!\!\!\!\!\!\!\!\!\!\!\!\!\!\!\!\!\!\!\!\!\!\!\!\!&\!\!\!\!\!\!\!\!\!
\!\!\!\!\!\!\!\!\!\!\!\!\!\!\!\!\!\!\!\!\!\!\!\!\!\!\!\!\!\!\!\!\text{in $\Omout_T$;\ }
  \\
&\frac{\partial}{\partial t}(\upsilonuno-\upsilondue)+\Div(\dfout \nabla \upsilondue)+
\Iion(\ubuno-\ubdue,\widetilde w)-\Iion(\overlineubuno-\overlineubdue,\overline{\widetilde w})\!=\!0,
\!\!\!\!\!\!\!\!\!\!\!\!\!\!\!\!\!\!\!\!&\!\!\!\!
\!\!\!\!\!\!\!\!\!\!\!\!\!\!\!\!\!\!\!\text{in $\Omint_T$;\ }
  \\
&-\Div(\dfdis \nabla \upsilond)=0,&\text{in $\Omint_T$;\ }
\\
& \dfint\nabla\upsilonuno\cdot\nu=0,&\text{on $\Memb_T$;\ }
\\
& \dfout\nabla\upsilondue\cdot\nu =\dfdis\nabla\upsilond\cdot\nu,&\text{on $\Memb_T$;\ }
\\
& \capuno\frac{\partial}{\partial t}(\upsilondue-\upsilond)+\capdue  (\upsilondue-\upsilond)=\dfout\nabla \upsilondue\cdot\nu,
&\text{on $\Memb_T$;\ }
  \\
&  \upsilonuno(x,t),\upsilondue(x,t),\upsilond(x,t)=0,&\text{on $\partial\Om\times(0,T)$;\ }
  \\
& \upsilonuno(x,0)-\upsilondue(x,0)=0,&\text{in $\Omout$;\ }
\\
& \upsilondue(x,0)-\upsilond(x,0)=0,&\text{on $\Memb$.\ }
\end{aligned}
\end{equation*}
Reasoning in a similar way as done for \eqref{eq:energy1}, i.e.
by multiplying the first equation by $\upsilonuno$,
the second one by $\upsilondue$, the third one by $\upsilond$,
subtracting the second equation from the first one, adding
the third one, integrating by parts, using the remaining equation of the previous system
moving the integral containing $\Iion$ to the right-hand side and using H\"{o}lder inequality, we get
\begin{multline*}
\sup_{t\in(0,T)}\int_{\Omout} (\upsilonuno-\upsilondue)^2(x,t)\di x +\int_{\Omout_T}\vert\nabla\upsilonuno\vert^2\di x\di t
+\int_{\Omout_T}\vert\nabla \upsilondue\vert^2\di x\di t
+\int_{\Omint_T}\vert\nabla \upsilond\vert^2\di x\di t
\\
+\sup_{t\in(0,T)}\int_{\Memb}(\upsilondue-\upsilond)^2(x,t)\di \sigma(x)
+\int_{\Memb_T} (\upsilondue-\upsilond)^2(x,t)\di \sigma(x)\di t
\\
\leq
\Vert\Iion(\ubuno-\ubdue,\widetilde w)-\Iion(\overlineubuno-\overlineubdue,\overline{\widetilde w)}\Vert_{L^2(\Omout_T)}\cdot
\Vert\upsilonuno-\upsilondue\Vert_{L^2(\Omout_T)}
\leq \const\Vert \upsilonuno- \upsilondue\Vert^2_{L^2(\Omout_T)}\,,
\end{multline*}
where, in the last inequality, we used \eqref{eq:ion1}. We can conclude by using Gronwall inequality.
\end{proof}

Notice that, by setting $V=\ubuno-\ubdue$, $U=\ubdue$ a.e. in $\Omout$, $U=\ud$ a.e. in $\Omint$,
and denoting by $[\cdot]$ the jump across $\Permemb$ of the quantity in the square brackets, i.e.
$[U]=\ubdue-\ud$ and $[\dfboth\nabla U\cdot\nu]=(\dfout\nabla \ubdue-\dfdis\nabla \ud)\cdot\nu$,
the system \eqref{eq:PDEin}--\eqref{eq:InitData3} can be written in the more convenient form
\begin{alignat}2
  \label{eq:PDEinc}
&\frac{\partial V}{\partial t}-\Div(\dfint \nabla V)+\Iion(V,\widetilde w)\!=\!f_1+\Div(\dfint\nabla U),\ &\text{in $\Omout_T$;}
  \\
  \label{eq:PDEoutc}
&-\Div((\dfint+\dfout) \nabla U)\!=\!f_1-f_2+\Div(\dfint\nabla V),\ &\text{in $\Omout_T$;}
  \\
  \label{eq:PDEdisc}
&-\Div(\dfdis \nabla U)=0,&\text{in $\Omint_T$;}
\\
\label{eq:flux1c}
&  \dfint\nabla ( V+U)\cdot\nu =0,&\text{on $\Memb_T$;}
\\
\label{eq:flux2c}
&  [\dfboth\nabla U\cdot\nu] =0,&\text{on $\Memb_T$;}
\\
  \label{eq:Circuitc}
&  \capuno\frac{\partial}{\partial t}[U]+\capdue  [U]=\dfout\nabla U\cdot\nu,
&\text{on $\Memb_T$;}
  \\
  \label{eq:BoundDatac}
&  V, U=0,&\text{on $\partial\Om\times(0,T)$;}
  \\
  \label{eq:InitData1c}
& V(x,0)=\overline v_{0}(x),&\text{in $\Omout$;}
\\
  \label{eq:InitData3c}
& [U](x,0)=s_{0}(x),&\text{on $\Memb$,}
\end{alignat}
complemented with the gating problem \eqref{eq:gating1}-\eqref{eq:gating2}, where again
$\ubuno-\ubdue$ is replaced by $V$. Clearly, $V\in L^2(0,T;H^1_{null}(\Omout))$ and $U\in L^2(0,T;\X^1_0(\Om))\underline{}$.
We recall that, by \eqref{eq:ion1}, the composed function $\Iion(V, \widetilde w)$
is also a Lipschitz function with respect to $V$.

The weak formulation of the previous problem is given by
\begin{multline}\label{eq:weak5}
-\int_{\Omout_T} V\partial_t\testb\di x\di t+\int_{\Omout_T}\dfint\nabla V\cdot\nabla \testb\di x\di t
+\int_{\Omout_T}\dfint\nabla U\cdot\nabla\testb
+\int_{\Omout_T} \Iion(V,\widetilde w)\testb\di x\di t
\\
+\int_{\Omout_T}(\dfint+\dfout)\nabla U\cdot\nabla \testduno \di x\di t
+\int_{\Omout_T}\dfint\nabla V\cdot\nabla \testduno\di x\di t
\\
+\int_{\Omint_T}\dfdis\nabla U\cdot \nabla\testddue\di x\di t
-\capuno\int_{\Memb_T}[U]\partial_t[\testd]\di\sigma\di t
+\capdue\int_{\Memb_T}[U][\testd]\di\sigma\di t
\\
=\!\!\int_{\Omout_T}f_1\testb\di x\di t+\int_{\Omout_T}(f_1-f_2)\testduno\di x\di t\,,
+\int_{\Omout} \overline v_0\testb(0)\di x
+\capuno\int_{\Memb}s_0[\testd](0)\di\sigma,
\end{multline}
for every $\testb\in L^2(0,T;H^1_{null}(\Omout))\cap H^1(0,T;L^2(\Omout))$,
$\testd\in L^2(0,T;\X^1_0(\Om))$ (which means
$\testduno\in L^2(0,T;H^1_{null}(\Omout))$, $\testddue\in L^2(0,T;H^1_{null}(\Omint))$), and $[\testd]\in H^1(0,T;L^2(\Memb))$,
with $\testb(T)=0$ and $[\testd](T)=0$.
Here, $[\testd]=\testduno-\testddue$ on $\Memb$.
We remark that the notation $\testduno$ for a test function acting in $\Omout$ is slightly counterintuitive, but
it allows us to write concisely the jump of such a function across the interface.
Clearly, by \eqref{eq:energy5}, we get the following energy inequality:
\begin{multline}\label{eq:energy3}
\sup_{t\in (0,T)}\int_{\Omout}V^2(x,t)\di x +\int_{\Omout_T}|\nabla V+\nabla U|^2\di x\di t
+\int_{\Omout_T}|\nabla U|^2\di x\di t
\\
+\int_{\Omint_T}|\nabla U|^2\di x\di t+\sup_{t\in (0,T)}\int_{\Memb}[U]^2(x,t)\di\sigma
+\int_{\Memb_T}[U]^2\di \sigma\di t
\\
\leq\const\left(\Vert f_1\Vert^2_{L^2(\Omout_T)}+\Vert f_2\Vert^2_{L^2(\Omout_T)}+\Vert \overline v_0\Vert^2_{L^2(\Omout)}
+\Vert s_0\Vert^2_{L^2(\Memb)}+1\right)\,,
\end{multline}
where $\const$ depends on $\const_0,\const_I,\capuno,\capdue$, and the geometry.
Notice that, by \eqref{eq:energy3}, it follows also that
\begin{equation}\label{eq:energy4}
\int_{\Omout_T}|\nabla V\vert^2\di x\di t\leq \const\left(\Vert f_1\Vert^2_{L^2(\Omout_T)}+\Vert f_2\Vert^2_{L^2(\Omout_T)}+
\Vert \overline v_0\Vert^2_{L^2(\Omout)}+\Vert s_0\Vert^2_{L^2(\Memb)}+1\right).
\end{equation}

\section{Well-posedness}\label{s:exist}

We will consider our problem in an abstract setting and to this
purpose we need first ``to move'' the source $f_1-f_2$ from \eqref{eq:PDEoutc} to \eqref{eq:PDEinc}, \eqref{eq:Circuitc}, and
\eqref{eq:InitData3c}.
Then, we introduce a bilinear form $a$ on
$ W\times W$ such that the problem is reduced to the abstract scheme
\eqref{eq:problem}. This form is constructed with auxiliary functions which are
obtained in Proposition \ref{p:p1} and in Remark \ref{r:r1}. In Proposition \ref{p:p4}, we
prove the necessary properties of the form $a$. In Theorem \ref{p:p2} and
Proposition \ref{p:p3}, we prove existence of solutions to the problem with
$I_{ion}\equiv 0$. Finally, the full result is obtained in Theorem \ref{t:t1},
where the complete problem will be treated as a nonlinear perturbation
of this case (see, for instance, \cite{Cazenave:Haraux:1998,CDP2,JPr,Pazy:2012}).

We start by considering, for a.e. $t\in (0,T)$,
the following auxiliary problem:
\begin{alignat}2
\label{eq:pde1}
 -\Div\big((\dfint+\dfout)\nabla \widetilde u\big) &= f_1-f_2, &\quad \hbox{in $\Omout$;}
\\
\label{eq:pde2}
\dfint\nabla\widetilde u\cdot\nu &=0, &\quad \hbox{on $\Memb$;}
\\
\label{eq:pde3}
 \widetilde u &= 0, &\quad \hbox{on $\partial\Omout\cap\partial\Om$.}
\end{alignat}
Clearly,
problem \eqref{eq:pde1}--\eqref{eq:pde3} is classical and admits a unique solution
$\widetilde u\in H^1(0,T;H^1_{null}(\Omout))$.
Moreover, we extend $\widetilde u$ inside $\Omint$ by zero, so that it has a nonzero jump on $\Memb$;
i.e., $\widetilde u\in \X^1_0(\Om)$.
On $\Memb$, let us define
\begin{equation}\label{eq:a8}
q:=-\capuno\frac{\partial [\widetilde u]}{\partial t}-\capdue[\widetilde u]
\end{equation}
and consider the problem for $(v,u)\in L^2(0,T;H^1_{null}(\Omout))\times L^2(0,T;\X^1_0(\Om))$ given by
\begin{alignat}2
  \label{eq:PDEins}
&\frac{\partial v}{\partial t}-\Div(\dfint \nabla v)\!=\!f_1+\Div(\dfint\nabla u)
+\Div(\dfint\nabla \widetilde u),\ &\text{in $\Omout_T$;}
  \\
  \label{eq:PDEouts}
&-\Div((\dfint+\dfout) \nabla u)\!=\!\Div(\dfint\nabla v),\ &\text{in $\Omout_T$;}
  \\
  \label{eq:PDEdiss}
&-\Div(\dfdis \nabla u)=0,&\text{in $\Omint_T$;}
\\
\label{eq:flux1s}
&  \dfint\nabla ( v+u)\cdot\nu =0,&\text{on $\Memb_T$;}
\\
\label{eq:flux2s}
&  \dfout\nabla u\cdot\nu =\dfdis\nabla u\cdot\nu,&\text{on $\Memb_T$;}
\\
  \label{eq:Circuits}
&  \capuno\frac{\partial}{\partial t}[u]+\capdue  [u]=\dfout\nabla u\cdot\nu+q,
&\text{on $\Memb_T$;}
  \\
  \label{eq:BoundDatas}
&  v, u=0,&\text{on $\partial\Om\times(0,T)$;}
  \\
  \label{eq:InitData1s}
& v(x,0)=\overline v_{0}(x),&\text{on $\Omout$;}
\\
  \label{eq:InitData3s}
& [u](x,0)=s_{0}(x)-[\widetilde u](x,0),&\text{on $\Memb$.}
\end{alignat}
The weak formulation of the previous problem is given by
\begin{multline}\label{eq:weak1}
-\int_{\Omout_T} v\partial_t\testb\di x\di t+\int_{\Omout_T}\dfint\nabla v\cdot\nabla \testb\di x\di t
+\int_{\Omout_T}\dfint\nabla u\cdot\nabla\testb\di x\di t
\\
+\int_{\Omout_T}(\dfint+\dfout)\nabla u\cdot\nabla \testduno \di x\di t
+\int_{\Omout_T}\dfint\nabla v\cdot\nabla \testduno\di x\di t
\\
+\int_{\Omint_T}\dfdis\nabla u\cdot \nabla\testddue\di x\di t
-\capuno\int_{\Memb_T}[u]\partial_t[\testd]\di\sigma\di t
+\capdue\int_{\Memb_T}[u][\testd]\di\sigma\di t
\\
=\int_{\Omout_T}f_1\testb\di x\di t
-\int_{\Omout_T}\dfint\nabla\widetilde u\cdot\nabla\testb\di x\di t
+\int_{\Memb_T}q[\testd]\di\sigma\di t\,,
\end{multline}
for every $\testb\in L^2(0,T;H^1_{null}(\Omout))\cap H^1_0(0,T;L^2(\Omout))$,
$\testd\in L^2(0,T;\X^1_0(\Om))$, where, as before, $[\testd]=\testduno-\testddue$ and $[\testd]\in H^1_0(0,T;L^2(\Memb))$.
The weak formulation \eqref{eq:weak1} shall be complemented with the initial conditions.
Indeed, as it will be proved in Theorem \ref{p:p2}, we have $v\in \CC^0([0,T];L^2(\Omout))$ and $[u]\in\CC^0([0,T];L^2(\Memb))$.
\medskip

The next step is to define a suitable bilinear form on $W$, which is continuous and coercive. To this purpose, we need the following result.

\begin{prop}\label{p:p1}
Let $(w,r)\in W$ be assigned.
Then, there exists a unique solution $\W\in \X^1_0(\Om)$ of the problem
\begin{alignat}2
  \label{eq:PDEuno}
&-\Div((\dfint+\dfout) \nabla\W )\!=\!\Div(\dfint\nabla w),\qquad &\text{in $\Omout$;}
  \\
  \label{eq:PDEdue}
&-\Div(\dfdis \nabla\W)=0,&\text{in $\Omint$;}
\\
\label{eq:fluxuno}
&  (\dfint+\dfout)\nabla \W\cdot\nu =\dfdis\nabla\W\cdot \nu-\dfint\nabla w\cdot\nu,\qquad
&\text{on $\Memb$;}
\\
  \label{eq:Circuituno}
&  [\W]=r, &\text{on $\Memb$;}
  \\
  \label{eq:Bound}
&  \W=0,&\text{on $\partial\Om$.}
\end{alignat}
Moreover, there exists a constant $\const>0$, depending on $\dfint,\dfout,\dfdis$, and the geometry, such that
\begin{equation}\label{eq:a4}
\Vert \W\Vert_{\X^1_0(\Om)}\leq \const(\Vert w\Vert_{H^1(\Omout)}+\Vert r\Vert_{H^{1/2}(\Memb)})\,.
\end{equation}
\end{prop}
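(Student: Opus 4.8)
The plan is to reduce the coupled transmission problem \eqref{eq:PDEuno}--\eqref{eq:Bound} to a variational problem on $\X^1_0(\Om)$ and then invoke Lax--Milgram. First I would homogenize the jump condition \eqref{eq:Circuituno}: pick a fixed lifting $R\in\X^1_0(\Om)$ with $R_{\mid_{\Omout}}\in H^1_{null}(\Omout)$, $R_{\mid_{\Omint}}\in H^1_{null}(\Omint)$, $R=0$ on $\partial\Om$, and $[R]=r$ on $\Memb$; this is possible precisely because $r\in H^{1/2}_0(\Memb,\Om)$, and one can choose $R$ with $\Vert R\Vert_{\X^1_0(\Om)}\leq\const\Vert r\Vert_{H^{1/2}(\Memb)}$. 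Writing $\W=Z+R$, the unknown $Z$ now lies in the closed subspace $\X^1_{0,0}(\Om):=\{\Z\in\X^1_0(\Om):[\Z]=0\text{ on }\Memb\}$, on which, by the Poincar\'{e} inequality \eqref{eq:poincare1}--\eqref{eq:a7}, $\Vert\nabla\cdot\Vert_{L^2(\Om)}$ is an equivalent norm.

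Next I would write the natural weak formulation: for test functions $\Z\in\X^1_{0,0}(\Om)$, multiply \eqref{eq:PDEuno} by $\Z_{\mid_{\Omout}}$ and \eqref{eq:PDEdue} by $\Z_{\mid_{\Omint}}$, integrate by parts, and add. The boundary terms on $\Memb$ produced are $(\dfint+\dfout)\nabla\W\cdot\nu\,\Z_{\mid_{\Omout}}-\dfdis\nabla\W\cdot\nu\,\Z_{\mid_{\Omint}}$ on one side and $\dfint\nabla w\cdot\nu\,\Z_{\mid_{\Omout}}$ from the right-hand side; since $[\Z]=0$, i.e. $\Z_{\mid_{\Omout}}=\Z_{\mid_{\Omint}}$ on $\Memb$, condition \eqref{eq:fluxuno} makes all of these cancel. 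The resulting identity is
\begin{equation*}
\int_{\Omout}(\dfint+\dfout)\nabla\W\cdot\nabla\Z\di x+\int_{\Omint}\dfdis\nabla\W\cdot\nabla\Z\di x=-\int_{\Omout}\dfint\nabla w\cdot\nabla\Z\di x,
\end{equation*}
for all $\Z\in\X^1_{0,0}(\Om)$. Substituting $\W=Z+R$ gives the problem: find $Z\in\X^1_{0,0}(\Om)$ with $b(Z,\Z)=\ell(\Z)$ for all $\Z\in\X^1_{0,0}(\Om)$, where $b(Z,\Z):=\int_{\Omout}(\dfint+\dfout)\nabla Z\cdot\nabla\Z+\int_{\Omint}\dfdis\nabla Z\cdot\nabla\Z$ and $\ell(\Z):=-\int_{\Omout}\dfint\nabla w\cdot\nabla\Z-\int_{\Omout}(\dfint+\dfout)\nabla R\cdot\nabla\Z-\int_{\Omint}\dfdis\nabla R\cdot\nabla\Z$.

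Then I would check the Lax--Milgram hypotheses. Continuity of $b$ and $\ell$ is immediate from the $L^\infty$ bounds $\const_0\leq\dfint,\dfout,\dfdis\leq\widetilde\const_0$ and Cauchy--Schwarz, together with $\Vert R\Vert_{\X^1_0(\Om)}\leq\const(\Vert w\Vert_{H^1(\Omout)}+\Vert r\Vert_{H^{1/2}(\Memb)})$. Coercivity is where the structure is used: $b(\Z,\Z)\geq\const_0\int_{\Omout}|\nabla\Z|^2+\const_0\int_{\Omint}|\nabla\Z|^2=\const_0\Vert\nabla\Z\Vert^2_{L^2(\Om)}$, which by \eqref{eq:a7} (with the $[\Z]$ term absent on $\X^1_{0,0}(\Om)$, valid in both geometric cases thanks to \eqref{eq:poincare1}) is equivalent to $\Vert\Z\Vert^2_{\X^1_0(\Om)}$. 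Lax--Milgram then yields a unique $Z$, hence a unique $\W=Z+R\in\X^1_0(\Om)$; uniqueness is independent of the choice of lifting $R$ since two liftings differ by an element of $\X^1_{0,0}(\Om)$. Finally, $\Vert\W\Vert_{\X^1_0(\Om)}\leq\Vert Z\Vert_{\X^1_0(\Om)}+\Vert R\Vert_{\X^1_0(\Om)}\leq\const\Vert\ell\Vert_*+\Vert R\Vert_{\X^1_0(\Om)}\leq\const(\Vert w\Vert_{H^1(\Omout)}+\Vert r\Vert_{H^{1/2}(\Memb)})$, which is \eqref{eq:a4}. The only mildly delicate point is the bookkeeping of the interface integrals: one must verify that \eqref{eq:fluxuno} is exactly the Neumann datum that the weak formulation on $\X^1_{0,0}(\Om)$ encodes, and conversely that a weak solution, via integration by parts against test functions with nonzero jump localized near $\Memb$, recovers both \eqref{eq:fluxuno} and the fact that $\nabla\W\cdot\nu$ from the $\Omout$ side matches the flux appearing in \eqref{eq:Circuituno}'s companion conditions — so the equivalence between the PDE formulation and the variational one should be stated carefully.
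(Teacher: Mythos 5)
Your argument is correct, and it takes a genuinely more streamlined route than the paper. You homogenize the jump once (lifting $r$ to some $R\in \X^1_0(\Om)$ with $[R]=r$) and then solve a single variational problem by Lax--Milgram on the zero-jump subspace of $\X^1_0(\Om)$, which is just $H^1_0(\Om)$; the identity you test against is exactly the weak formulation \eqref{eq:weak7}, which the paper only extracts afterwards in Remark \ref{r:r1}. The paper instead decomposes $\W=\overline\W_1+\overline\W_2$, where $\overline\W_1$ solves a Dirichlet problem in $\Omout$ with boundary datum $r$ on $\Memb$ and source $\Div(\dfint\nabla w)$ (extended by zero in $\Omint$, so it carries the jump), and $\overline\W_2\in H^1_0(\Om)$ solves a compensating transmission problem with flux data on $\Memb$, whose solvability is imported from an external result (Lemma 5 of Amar--Andreucci--Bisegna--Gianni 2005); the estimate \eqref{eq:a4} is then assembled from the two energy bounds. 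What your version buys is self-containment (no external lemma; coercivity of $b$ on $H^1_0(\Om)$ is immediate from the lower bound $\const_0$ and Poincar\'e) and a shorter path to \eqref{eq:a4}; what the paper's version buys is mainly compatibility with its later bookkeeping, since the flux-compensation problem for $\overline\W_2$ makes the interface condition \eqref{eq:fluxuno} explicit. Note also that both arguments rest on the same two tacit ingredients, at the same level of rigor: the existence of an extension of $r\in H^{1/2}_0(\Memb,\Om)$ bounded by $\const\Vert r\Vert_{H^{1/2}(\Memb)}$ and vanishing on $\partial\Om$ (the paper simply posits $\widetilde r\in H^1_{null}(\Omout)$ with this bound), and the routine verification that the variational solution satisfies \eqref{eq:PDEuno}--\eqref{eq:Bound}, with \eqref{eq:fluxuno} holding in the natural weak (dual trace) sense --- the point you flag at the end; the paper is equally brief there, so no gap is introduced by leaving it at that level.
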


\begin{proof}
Uniqueness for problem \eqref{eq:PDEuno}--\eqref{eq:Bound} is a straightforward consequence of its linearity.
In order to prove that a solution does exist, we first
consider the following auxiliary problem:
\begin{alignat}2
  \label{eq:PDEtre}
&-\Div((\dfint+\dfout) \nabla\overline\W_1 )\!=\!\Div(\dfint\nabla w),\qquad &\text{in $\Omout$;}
  \\
\label{eq:datauno}
&  \overline \W_1=r,&\text{on $\Memb$;}
\\
  \label{eq:PDEtrebis}
&  \overline\W_1=0,&\text{in $\Omint$;}
\\
  \label{eq:datadue}
& \overline\W_1=0, &\text{on $\partial\Om$.}
\end{alignat}
Clearly, the previous problem admits a unique solution $\overline\W_1\in \X^1_0(\Om)$. Moreover,
there exists a constant $\const>0$, depending on $\dfint,\dfout$ and the geometry, such that
\begin{equation}\label{eq:a5}
\Vert \overline\W_1\Vert_{\X^1_0(\Om)}\leq \const(\Vert w\Vert_{H^1(\Omout)}+\Vert r\Vert_{H^{1/2}(\Memb)})\,.
\end{equation}
Indeed, let us denote by $\widetilde r\in H^1_{null}(\Omout)$
an extension of $r$ from $\Memb$ to the whole $\Omout$,
such that $\Vert\widetilde r\Vert_{H^1(\Omout)}\leq \const \Vert r\Vert_{H^{1/2}(\Memb)}$,
and set $\overline \W^r_1:=\overline\W_1-\widetilde r$.
Clearly, $\overline \W_1^r$ satisfies the problem
\begin{alignat}2
  \label{eq:PDEquattro}
&-\Div((\dfint+\dfout) \nabla\overline\W_1^r )\!=\!\Div(\dfint\nabla w)+\Div((\dfint+\dfout) \nabla\widetilde r ),\ &\text{in $\Omout$;}
  \\
  \nonumber &
\\
  \label{eq:dataquattro}
& \overline\W_1^r=0, &\text{on $\partial\Omout$.}
\end{alignat}
Therefore, by the standard energy inequality, we get
\begin{equation}\label{eq:energy2}
\int_{\Omout}\vert \nabla\overline\W_1^r\vert^2\di x\leq\const
\left(\Vert\nabla w\Vert^2_{L^2(\Omout)}+\Vert r\Vert^2_{H^{1/2}(\Memb)}\right),
\end{equation}
which implies \eqref{eq:a5}.

Now, let us consider the second auxiliary problem for $\overline\W_2\in H^1_0(\Om)$ given by
\begin{alignat}2
  \label{eq:PDEcinque}
&-\Div((\dfint+\dfout) \nabla\overline\W_2 )\!=0,\ &\text{in $\Omout$;}
\\
  \label{eq:PDEsei}
&-\Div(\dfdis \nabla\overline\W_2 )\!=0,\ &\text{in $\Omint$;}
\\
\label{eq:jumpcinque}
& [\overline \W_2]=0,\ &\text{on $\Memb$;}
  \\
\label{eq:circuitcinque}
& \big((\dfint+\dfout)\nabla\overline \W_2-\dfdis\nabla\overline\W_2\big)\cdot\nu=
-\big((\dfint+\dfout)\nabla\overline\W_1+\dfint\nabla w\big)\cdot\nu,\
&\text{on $\Memb$;}
  \\
\label{eq:datacinque}
&  \overline \W_2=0, &\text{on $\partial\Om$.}
\end{alignat}
Existence and uniqueness for the previous problem is guaranteed by \cite[Lemma 5]{Amar:Andreucci:Bisegna:Gianni:2005};
moreover, the weak formulation of \eqref{eq:PDEcinque}--\eqref{eq:datacinque} is given by
\begin{multline}\label{eq:weak2}
0=\int_{\Omout}(\dfint+\dfout)\nabla\overline\W_2\cdot\nabla\varphi\di x
+\int_{\Memb}(\dfint+\dfout)\nabla\overline\W_2\cdot\nu\,\varphi\di \sigma
\\
+\int_{\Omint}\dfdis\nabla\overline\W_2\cdot\nabla\varphi\di x
-\int_{\Memb}\dfdis\nabla\overline\W_2\cdot\nu\,\varphi\di \sigma
\\
=\int_{\Omout}(\dfint+\dfout)\nabla\overline\W_2\cdot\nabla\varphi\di x
+\int_{\Omint}\dfdis\nabla\overline\W_2\cdot\nabla\varphi\di x
\\
-\int_{\Memb}(\dfint+\dfout)\nabla\overline\W_1\cdot\nu\,\varphi\di\sigma
-\int_{\Memb}\dfint\nabla w\cdot\nu\,\varphi\di\sigma\,,
\end{multline}
for every $\varphi\in H^1_0(\Om)$.
From \eqref{eq:PDEtre}, we obtain
\begin{multline*}
-\int_{\Memb}(\dfint+\dfout)\nabla\overline\W_1\cdot\nu\,\varphi\di\sigma
-\int_{\Memb}\dfint\nabla w\cdot\nu\,\varphi\di\sigma
\\
=\int_{\Omout}(\dfint+\dfout)\nabla\overline\W_1\cdot\nabla\varphi\di x
+\int_{\Omout}\dfint\nabla w\cdot\nabla\varphi\di x\,.
\end{multline*}
which, replaced in \eqref{eq:weak2}, provides
\begin{multline}\label{eq:weak3}
0=\int_{\Omout}(\dfint+\dfout)\nabla\overline\W_2\cdot\nabla\varphi\di x
+\int_{\Omint}\dfdis\nabla\overline\W_2\cdot\nabla\varphi\di x
\\
+\int_{\Omout}(\dfint+\dfout)\nabla\overline\W_1\cdot\nabla\varphi\di x
+\int_{\Omout}\dfint\nabla w\cdot\nabla\varphi\di x\,.
\end{multline}
By taking $\varphi =\overline\W_2$ in \eqref{eq:weak3}, we get
\begin{multline*}
\int_{\Omout}(\dfint+\dfout)\vert\nabla\overline\W_2\vert^2\di x+\int_{\Omint}\dfdis\vert\nabla \overline\W_2\vert^2\di x
\\
=
-\int_{\Omout}(\dfint+\dfout)\nabla\overline\W_1\cdot\nabla\overline\W_2\di x
-\int_{\Omout}\dfint\nabla w\cdot\nabla\overline\W_2\di x\,,
\end{multline*}
which, taking into account \eqref{eq:a5}, implies
\begin{equation}\label{eq:a2}
\Vert\nabla\overline\W_2\Vert_{L^2(\Om)}\leq\const(\Vert\nabla\overline\W_1\Vert_{L^2(\Omout)}+\Vert\nabla w\Vert_{L^2(\Omout)})
\leq \const(\Vert w\Vert_{H^1(\Omout)}+\Vert r\Vert_{H^{1/2}(\Memb)})\,,
\end{equation}
with $\const$ depending only on $\dfint,\dfout,\dfdis$ and the geometry.

Finally, setting $\W=\overline\W_1+\overline \W_2$, it is easy to see that $\W\in \X^1_0(\Om)$ and satisfies \eqref{eq:PDEuno}--\eqref{eq:Bound}
and also \eqref{eq:a4}.
\end{proof}

\begin{remark}\label{r:r1}
We point out that, taking $\widetilde r\in H^1_{null}(\Omout)$ as in the proof of Proposition \ref{p:p1} and defining
\begin{equation}\label{eq:a10}
\W^r=\left\{
\begin{aligned}
& \W-\widetilde r\,,\qquad &\hbox{in $\Omout$;}
\\
& \W\,,\qquad &\hbox{in $\Omint$,}
\end{aligned}
\right.
\end{equation}
we get that $\W^r\in H^1_0(\Om)$ and satisfies
\begin{alignat}2
  \label{eq:PDEunod}
&-\Div((\dfint+\dfout) \nabla\W^r )\!=\!\Div(\dfint\nabla w)+\Div((\dfint+\dfout)\nabla\widetilde r),\qquad &\text{in $\Omout$;}
  \\
  \label{eq:PDEdued}
&-\Div(\dfdis \nabla\W^r)=0,&\text{in $\Omint$;}
\\
\label{eq:fluxunod}
&  (\dfint+\dfout)\nabla \W^r\cdot\nu =\dfdis\nabla\W^r\!\cdot\! \nu-\dfint\nabla w\!\cdot\!\nu
-(\dfint+\dfout)\nabla\widetilde r\cdot\nu,\qquad
&\text{on $\Memb$;}
\\
  \label{eq:Circuitunod}
&  [\W^r]=0, &\text{on $\Memb$;}
  \\
  \label{eq:Boundd}
&  \W^r=0,&\text{on $\partial\Om$.}
\end{alignat}
The weak formulation of problem \eqref{eq:PDEunod}--\eqref{eq:Circuitunod} is given by
\begin{multline}\label{eq:weak6}
\int_{\Omout}(\dfint+\dfout)\nabla\W^r\cdot\nabla\phi\di x+\int_{\Omint} \dfdis\nabla\W^r\cdot\nabla\phi
\\
= -\int_{\Omout}\dfint\nabla w\cdot\nabla\phi\di x -\int_{\Omout}(\dfint+\dfout)\nabla\widetilde r\cdot\nabla\phi\di x\,,
\end{multline}
for every $\phi\in H^1_0(\Om)$, which implies
\begin{equation}\label{eq:weak7}
\int_{\Omout}(\dfint+\dfout)\nabla\W\cdot\nabla\phi\di x+\int_{\Omint} \dfdis\nabla\W\cdot\nabla\phi
= -\int_{\Omout}\dfint\nabla w\cdot\nabla\phi\di x\,,
\end{equation}
for every $\phi\in H^1_0(\Om)$.
\end{remark}

Now, we are in the position to define the bilinear form $a:W\times W\to \R$ as
\begin{multline}\label{eq:bilin}
a\big((w,r),(\overline w,s)\big):=
\int_{\Omout}\dfint\nabla w\cdot\nabla\overline w\di x+\int_{\Omout}\dfint\nabla\W\cdot\nabla\overline w\di x
\\
+\int_{\Omout}(\dfint+\dfout)\nabla\W\cdot\nabla\overline\W\di x
+\int_{\Omout}\dfint\nabla w\cdot\nabla\overline\W\di x+\int_{\Omint}\dfdis\nabla\W\cdot\nabla\overline\W\di x
+\capdue\int_{\Memb} rs\di\sigma\,,
\end{multline}
where $\W$ and $\overline\W$ are the solutions of \eqref{eq:PDEuno}--\eqref{eq:Bound} corresponding to $(w,r)$ and
$(\overline w,s)$, respectively.

\begin{prop}\label{p:p4}
The bilinear form $a:W\times W\to \R$, defined in \eqref{eq:bilin}, is symmetric, continuous, and coercive.
\end{prop}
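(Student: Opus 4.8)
The plan is to verify the three properties in turn, using Proposition~\ref{p:p1} (and Remark~\ref{r:r1}) to control the auxiliary functions $\W,\overline\W$ in terms of the data $(w,r),(\overline w,s)$. Throughout, write $\W$ for the solution of \eqref{eq:PDEuno}--\eqref{eq:Bound} associated with $(w,r)$ and $\overline\W$ for the one associated with $(\overline w,s)$; note that both are linear in their data.

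\emph{Symmetry.} First I would observe that the last term $\capdue\int_{\Memb}rs\di\sigma$ is manifestly symmetric, so it suffices to treat the five volume integrals. The key identity is the weak formulation \eqref{eq:weak7}: testing the equation for $\W$ with $\phi=\overline\W^{\,r}\in H^1_0(\Om)$ (the $H^1_0$-representative of $\overline\W$ built in Remark~\ref{r:r1}) and the equation for $\overline\W$ with $\phi=\W^{r}$, one obtains two scalar identities whose left-hand sides differ only by the symmetric bilinear expression $\int_{\Omout}(\dfint+\dfout)\nabla\W\cdot\nabla\overline\W\di x+\int_{\Omint}\dfdis\nabla\W\cdot\nabla\overline\W\di x$. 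Rearranging and adding back the contribution of the extension functions $\widetilde r,\widetilde s$ (which drop out because $\W=\W^r+\widetilde r$ only inside $\Omout$ and the extra gradient terms cancel pairwise), one checks that the sum of the five volume terms in \eqref{eq:bilin} equals the same expression with $(w,r)$ and $(\overline w,s)$ interchanged. Concretely, after using \eqref{eq:weak7} twice, the expression reduces to $\int_{\Omout}\dfint\nabla w\cdot\nabla\overline w\di x-\int_{\Omout}(\dfint+\dfout)\nabla\W\cdot\nabla\overline\W\di x-\int_{\Omint}\dfdis\nabla\W\cdot\nabla\overline\W\di x$, which is patently symmetric.

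\emph{Continuity.} This is routine: apply Cauchy--Schwarz and the uniform bounds $\const_0\le\dfint,\dfout,\dfdis\le\widetilde\const_0$ to each of the six terms of \eqref{eq:bilin}, and then invoke the a priori estimate \eqref{eq:a4} from Proposition~\ref{p:p1}, namely $\Vert\W\Vert_{\X^1_0(\Om)}\le\const(\Vert w\Vert_{H^1(\Omout)}+\Vert r\Vert_{H^{1/2}(\Memb)})$ and the analogous bound for $\overline\W$. Since $\Vert w\Vert_{H^1(\Omout)}\le\const\Vert(w,r)\Vert_W$ by Poincar\'e on $H^1_{null}(\Omout)$ and $\Vert r\Vert_{H^{1/2}(\Memb)}\le\Vert(w,r)\Vert_W$ by definition of the $W$-norm, one gets $|a((w,r),(\overline w,s))|\le\const\Vert(w,r)\Vert_W\Vert(\overline w,s)\Vert_W$.

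\emph{Coercivity.} I expect this to be the main obstacle, since the volume part of $a$ is an indefinite-looking combination of five terms. The strategy is to use the symmetrized form obtained above: take $(\overline w,s)=(w,r)$, so that $\overline\W=\W$, and rewrite
\[
a((w,r),(w,r))=\int_{\Omout}\dfint|\nabla w|^2\di x-\int_{\Omout}(\dfint+\dfout)|\nabla\W|^2\di x-\int_{\Omint}\dfdis|\nabla\W|^2\di x+\capdue\int_{\Memb}r^2\di\sigma.
\]
This still has the wrong sign on the $\W$-terms, so the symmetrized form alone is not enough; I would instead go back to \eqref{eq:bilin} directly with $\overline\W=\W$ and complete the square in the three terms involving $\dfint$: grouping $\int_{\Omout}\dfint|\nabla w|^2+2\int_{\Omout}\dfint\nabla w\cdot\nabla\W+\int_{\Omout}\dfint|\nabla\W|^2=\int_{\Omout}\dfint|\nabla(w+\W)|^2\ge 0$, and what remains is $\int_{\Omout}\dfout|\nabla\W|^2\di x+\int_{\Omint}\dfdis|\nabla\W|^2\di x+\capdue\int_{\Memb}r^2\di\sigma$, which is a sum of manifestly nonnegative terms. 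Thus $a((w,r),(w,r))\ge\int_{\Omout}\dfint|\nabla(w+\W)|^2+\int_{\Omout}\dfout|\nabla\W|^2+\int_{\Omint}\dfdis|\nabla\W|^2+\capdue\int_{\Memb}r^2$. To upgrade this to genuine coercivity $\ge\const\Vert(w,r)\Vert_W^2$, I must control $\Vert\nabla w\Vert_{L^2(\Omout)}^2$ and $\Vert r\Vert_{H^{1/2}(\Memb)}^2$ from below. For the $H^{1/2}$-norm of $r$: since $[\W]=r$ on $\Memb$ and $\W\in\X^1_0(\Om)$, the trace theorem and the Poincar\'e inequality \eqref{eq:poincare1}--\eqref{eq:a7} give $\Vert r\Vert_{H^{1/2}(\Memb)}^2\le\const(\Vert\nabla\W\Vert_{L^2(\Om)}^2+\Vert r\Vert_{L^2(\Memb)}^2)$, all of which is already bounded by the right-hand side. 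For $\Vert\nabla w\Vert_{L^2(\Omout)}$: from $\int_{\Omout}\dfint|\nabla(w+\W)|^2\ge\const_0\Vert\nabla(w+\W)\Vert_{L^2(\Omout)}^2$ and $\Vert\nabla\W\Vert_{L^2(\Omout)}^2\le\const^{-1}_0\int_{\Omout}\dfout|\nabla\W|^2$ (both already controlled), the triangle inequality yields $\Vert\nabla w\Vert_{L^2(\Omout)}^2\le\const\, a((w,r),(w,r))$. Combining and using Poincar\'e on $H^1_{null}(\Omout)$ once more gives $\Vert(w,r)\Vert_W^2\le\const\, a((w,r),(w,r))$, which is the desired coercivity. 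The one subtlety to be careful about is that the constant in the bound $\Vert\nabla\W\Vert_{L^2(\Omout)}^2\le\const\,(\text{nonneg.\ terms})$ is genuinely controlled by $\const_0,\widetilde\const_0$ and the geometry and does not degenerate; this follows because $\dfout\ge\const_0$ pointwise, so no hidden cancellation occurs.
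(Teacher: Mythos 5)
Your continuity and coercivity arguments coincide with the paper's: continuity follows from Cauchy--Schwarz and the estimate \eqref{eq:a4}, and coercivity from completing the square in the $\dfint$-terms,
\begin{equation*}
a\big((w,r),(w,r)\big)=\int_{\Omout}\dfint\vert\nabla(w+\W)\vert^2\di x+\int_{\Omout}\dfout\vert\nabla\W\vert^2\di x
+\int_{\Omint}\dfdis\vert\nabla\W\vert^2\di x+\capdue\int_{\Memb}r^2\di\sigma\,,
\end{equation*}
followed by the recovery of $\Vert r\Vert_{H^{1/2}(\Memb)}$ from $r=[\W]$ via the trace theorem and the Poincar\'e inequality \eqref{eq:poincare1}, and of $\Vert\nabla w\Vert_{L^2(\Omout)}$ by the triangle inequality; this is exactly the argument in the paper.

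The symmetry step, however, contains a genuine error. The ``reduced'' expression you claim after using \eqref{eq:weak7} twice --- that the volume part of $a$ equals $\int_{\Omout}\dfint\nabla w\cdot\nabla\overline w\di x-\int_{\Omout}(\dfint+\dfout)\nabla\W\cdot\nabla\overline\W\di x-\int_{\Omint}\dfdis\nabla\W\cdot\nabla\overline\W\di x$ --- is false. Identity \eqref{eq:weak7} admits only test functions $\phi\in H^1_0(\Om)$, and $\W,\overline\W$ are not admissible since $[\W]=r$, $[\overline\W]=s$; replacing them by $\W^r,\overline\W^s$ as in Remark \ref{r:r1} leaves the extension terms $\int_{\Omout}(\dfint+\dfout)\nabla\W\cdot\nabla\widetilde s\di x+\int_{\Omout}\dfint\nabla w\cdot\nabla\widetilde s\di x$ (and their analogues with $\widetilde r$), which do not cancel ``pairwise'' and are nonzero in general. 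Indeed, comparing your reduced formula on the diagonal with the correct completed-square identity above, the two agree only if $\int_{\Omout}(\dfint+\dfout)\vert\nabla\W\vert^2\di x+\int_{\Omint}\dfdis\vert\nabla\W\vert^2\di x+\int_{\Omout}\dfint\nabla w\cdot\nabla\W\di x=0$, i.e.\ only if one could legitimately take $\phi=\W$ in \eqref{eq:weak7}; for $w=0$ and $r\neq0$ in the connected/connected case this would force $\nabla\W\equiv0$ and hence, with the boundary condition \eqref{eq:Bound}, $[\W]=0$, a contradiction. (That your ``symmetrized'' diagonal expression came out with the wrong sign was the warning signal.) The slip is harmless only because symmetry requires no PDE argument at all: in \eqref{eq:bilin} the second and fourth terms are mapped into each other by the swap $(w,r)\leftrightarrow(\overline w,s)$, and the remaining terms are individually symmetric since the coefficients are scalar; equivalently, the paper simply regroups $a$ as $\int_{\Omout}\dfint\nabla(w+\W)\cdot\nabla(\overline w+\overline\W)\di x+\int_{\Omout}\dfout\nabla\W\cdot\nabla\overline\W\di x+\int_{\Omint}\dfdis\nabla\W\cdot\nabla\overline\W\di x+\capdue\int_{\Memb}rs\di\sigma$, the same regrouping that drives your coercivity computation. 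With your symmetry paragraph replaced by this observation, the proof is correct and coincides with the paper's.
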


\begin{proof}
Notice that the bilinear form $a$ can be rewritten as
\begin{multline*}
a\big((w,r),(\overline w,s)\big)=\int_{\Omout}\dfint\nabla(w+\W)\cdot\nabla \overline w\di x+
\int_{\Omout}\dfint\nabla(w+\W)\cdot\nabla \overline\W\di x
\\
+\int_{\Omout}\dfout\nabla\W\cdot\nabla\overline\W\di x+\int_{\Omint}\dfdis\nabla\W\cdot\nabla\overline\W\di x+
\capdue \int_{\Memb}rs\di\sigma
\\
=\int_{\Omout}\dfint\nabla(w+\W)\cdot\nabla (\overline w+\overline\W)\di x+
\int_{\Omout}\dfout\nabla\W\cdot\nabla\overline\W\di x+\int_{\Omint}\dfdis\nabla\W\cdot\nabla\overline\W\di x+
\capdue \int_{\Memb}rs\di\sigma\,,
\end{multline*}
which immediately proves that it is symmetric.
Moreover, from \eqref{eq:a4}, it easily follows that $a$ is continuous.
In order to prove that it is also coercive,
we note that
\begin{multline}\label{eq:a6}
a\big((w,r),(w,r)\big)=\int_{\Omout}\dfint\nabla(w+\W)\cdot\nabla (w+\W)\di x+
\int_{\Omout}\dfout\nabla\W\cdot\nabla\W\di x
\\
+\int_{\Omint}\dfdis\nabla\W\cdot\nabla\W\di x+
\capdue \int_{\Memb}r^2\di\sigma
\\
\geq \const(\Vert \nabla w+\nabla\W\Vert^2_{L^2(\Omout)}+\Vert\nabla\W\Vert^2_{L^2(\Omout)}
+\Vert\nabla\W\Vert^2_{L^2(\Omint)} +\Vert r\Vert^2_{L^2(\Memb)})
\\
\geq \const(\Vert w\Vert^2_{H^1(\Omout)}+\Vert r\Vert^2_{H^{1/2}(\Memb)})\,,
\end{multline}
where, in the last inequality, we take into account that $r=[\W]$ (see \eqref{eq:Circuituno}) and we
use the Poincar\'{e} inequality \eqref{eq:poincare1} and the classical trace inequality, which assure that
$$
\Vert r\Vert^2_{H^{1/2}(\Memb)}\leq \const \Vert \W\Vert^2_{\X^1_0(\Omout)}\leq\const \left(\Vert \nabla\W\Vert^2_{L^2(\Omout)}
+\Vert\nabla\W\Vert^2_{L^2(\Omint)} +\Vert r\Vert^2_{L^2(\Memb)}\right)\,.
$$
\end{proof}

\begin{thm}\label{p:p2}
Assume that $\dfint,\dfout,\dfdis,\capuno,\capdue,f_1,f_2,\overline v_0, s_0$ are as in Subsection \ref{ss:position}.
Let $\widetilde u$ be the unique solution of
problem \eqref{eq:pde1}--\eqref{eq:pde3}, extended by zero inside $\Omint$, and $q$ be defined in \eqref{eq:a8}.
Then, problem \eqref{eq:PDEins}--\eqref{eq:InitData3s} admits a unique solution $(v,u)\in L^2(0,T;H^1_{null}(\Omout))\times
L^2(0,T;\X^1_0(\Om))$, such that $v\in \CC^0([0,T];L^2(\Omout))$ and $[u]\in\CC^0([0,T];L^2(\Memb))$.
\end{thm}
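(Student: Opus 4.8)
The plan is to recast problem \eqref{eq:PDEins}--\eqref{eq:InitData3s} as an abstract parabolic Cauchy problem on the Hilbert triple $W\hookrightarrow H\hookrightarrow W'$ introduced in \eqref{eq:space2}--\eqref{eq:scalar2}, governed by the bilinear form $a$ of \eqref{eq:bilin}, and then invoke the standard existence theory for such problems (see \cite{Cazenave:Haraux:1998,Pazy:2012}). First I would observe that the natural state variable is the pair $(v,[u])\in W$: given $(v,[u])$, Proposition \ref{p:p1} (applied with $(w,r)=(v,[u])$) produces, for a.e.\ $t$, the unique $\W\in\X^1_0(\Om)$ solving \eqref{eq:PDEuno}--\eqref{eq:Bound}, and one checks that $u:=\W$ is exactly the component of the solution appearing in \eqref{eq:PDEouts}--\eqref{eq:flux2s} and \eqref{eq:Circuits}; indeed comparing \eqref{eq:PDEuno}--\eqref{eq:Circuituno} with \eqref{eq:PDEouts}--\eqref{eq:InitData3s} shows they coincide once $w=v$, $r=[u]$. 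Thus $u$ is determined by $v$ and $[u]$ through the elliptic solver of Proposition \ref{p:p1}, and the only genuinely evolutionary unknowns are $v$ (in $\Omout$) and $[u]$ (on $\Memb$). Testing \eqref{eq:PDEins} with $\testb$ and the elliptic block with $\testduno,\testddue$, and using the interface relations \eqref{eq:flux1s}--\eqref{eq:Circuits}, the whole weak formulation \eqref{eq:weak1} collapses to
\begin{equation*}
\frac{\di}{\di t}\big((v,[u]),(\overline w,s)\big)_H+a\big((v,[u]),(\overline w,s)\big)=\langle F(t),(\overline w,s)\rangle,\qquad\forall\,(\overline w,s)\in W,
\end{equation*}
with $F(t)\in W'$ built from $f_1$, $\widetilde u$ and $q$ (the three terms on the right-hand side of \eqref{eq:weak1}); here the scalar product $(\cdot,\cdot)_H$ is precisely \eqref{eq:scalar1}, the weight $\capuno$ on $\Memb$ matching the capacitive term $-\capuno\int_{\Memb_T}[u]\partial_t[\testd]$.

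Next I would verify that $F\in L^2(0,T;W')$: the term $\int_{\Omout}f_1\testb$ is controlled by $\|f_1\|_{L^2(\Omout)}$, the term $\int_{\Omout}\dfint\nabla\widetilde u\cdot\nabla\testb$ by $\|\widetilde u\|_{H^1_{null}(\Omout)}$ (which is in $L^2(0,T)$, in fact $H^1(0,T)$, by the regularity of $\widetilde u$ stated after \eqref{eq:pde3}), and the boundary term $\int_{\Memb}q[\testd]$ by $\|q\|_{H^{-1/2}(\Memb)}$, with $q=-\capuno\partial_t[\widetilde u]-\capdue[\widetilde u]$ from \eqref{eq:a8} again lying in $L^2(0,T;H^{-1/2}(\Memb))$ because $\widetilde u\in H^1(0,T;H^1_{null}(\Omout))$ and the trace map $\X^1_0(\Om)\to H^{1/2}(\Memb)$ is continuous. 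The initial datum $(\overline v_0,s_0-[\widetilde u](0))$ lies in $H$ since $\overline v_0\in L^2(\Omout)$, $s_0\in L^2(\Memb)$ and $[\widetilde u](0)\in H^{1/2}(\Memb)\subset L^2(\Memb)$. By Proposition \ref{p:p4} the form $a$ is bilinear, continuous and coercive on $W$ (symmetry is not even needed here), so the Lions--type theorem for abstract parabolic equations applies verbatim: there exists a unique
\begin{equation*}
(v,[u])\in L^2(0,T;W)\cap H^1(0,T;W')\hookrightarrow\CC^0([0,T];H),
\end{equation*}
which gives at once $v\in\CC^0([0,T];L^2(\Omout))$ and $[u]\in\CC^0([0,T];L^2(\Memb))$ and makes the initial conditions \eqref{eq:InitData1s}, \eqref{eq:InitData3s} meaningful. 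Finally, reconstructing $u$ from $(v,[u])$ via Proposition \ref{p:p1} and using the bound \eqref{eq:a4} gives $u\in L^2(0,T;\X^1_0(\Om))$, and unwinding the identifications shows $(v,u)$ solves \eqref{eq:PDEins}--\eqref{eq:InitData3s} in the weak sense \eqref{eq:weak1}; uniqueness follows from uniqueness in the abstract problem together with uniqueness in Proposition \ref{p:p1}.

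The step I expect to be the main obstacle is the bookkeeping that shows the weak formulation \eqref{eq:weak1} is genuinely \emph{equivalent} to the abstract equation on $W$ — that is, that the elliptic constraint \eqref{eq:PDEouts}--\eqref{eq:flux2s} together with the interface law \eqref{eq:Circuits} can be eliminated in favour of the single form $a$ acting on the pair $(v,[u])$, with no residual coupling left over. Concretely one must check that for test pairs of the special structure used to define $\W$ in Proposition \ref{p:p1}, the combination of volume integrals $\int_{\Omout}\dfint\nabla u\cdot\nabla\testb+\int_{\Omout}(\dfint+\dfout)\nabla u\cdot\nabla\testduno+\int_{\Omout}\dfint\nabla v\cdot\nabla\testduno+\int_{\Omint}\dfdis\nabla u\cdot\nabla\testddue$ plus the resistive boundary term collapses exactly to $a((v,[u]),(\overline w,[\testd]))$ minus the part already carried by $\int_{\Omout}\dfint\nabla v\cdot\nabla\testb$; this is precisely the content of the rewriting of $a$ performed in the proof of Proposition \ref{p:p4} and of the weak identity \eqref{eq:weak7} in Remark \ref{r:r1}, so the obstacle is organisational rather than deep. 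A minor point to handle with care is the time-derivative term $-\capuno\int_{\Memb_T}[u]\partial_t[\testd]$: it must be matched against $\frac{\di}{\di t}(\cdot,\cdot)_H$ using exactly the weighted $H$-product \eqref{eq:scalar1}, which is why $\alpha=\capuno$ was built into that product from the start.
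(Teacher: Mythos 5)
Your proposal is correct and follows essentially the same route as the paper: reduce to the abstract parabolic problem for the pair $(v,[u])$ on the triple $W\subset H\subset W^*$ with the form $a$ of \eqref{eq:bilin} and the source built from $f_1$, $\widetilde u$, $q$, solve it by the standard linear parabolic theory (the paper cites \cite[Theorem 23.A]{Zeidler:1990}), recover $u$ through the elliptic solver of Proposition \ref{p:p1}, and then prove equivalence with the weak formulation \eqref{eq:weak1} by passing from the constrained test functions to arbitrary $\testd\in\X^1_0(\Om)$ via \eqref{eq:weak7} of Remark \ref{r:r1}. You even single out as the main obstacle exactly the step the paper treats in detail (the identity \eqref{eq:weak10} and the density of product test functions), so there is nothing substantive to add.
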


\begin{proof}
Let us denote by $\langle\cdot,\cdot\rangle_{W^*W}$ the duality pairing between $W$ and its dual space $W^*$ and define
$B\in W^*$ as
$$
\langle B,(\testb,[\testd])\rangle_{W^*W}=\int_{\Omout} f_1\testb\di x-\int_{\Omout}\dfint\nabla\widetilde u\cdot\nabla\testb\di x
+\int_{\Memb}q[\testd]\di\sigma\,,
$$
for every $(\testb,[\testd])\in W$.
By using the bilinear form $a$ introduced in \eqref{eq:bilin}, we can consider the following abstract problem:
\begin{equation}\label{eq:problem}
\begin{aligned}
& \qquad\hbox{find $(v,[u])\in L^2(0,T;W)\cap \CC^0([0,T];H)$}
\\
&\qquad \hbox{with $v(0)=\overline v_0$ and $[u](0)=s_0-[\widetilde u](0)$  such that}
\\
& \frac{\rm d}{{\rm d}t }\big((v,[u]),(\testb,[\testd])\big)_H+a\big((v,[u]),(\testb,[\testd])\big)
\\
&\quad = \langle B,(\testb,[\testd])\rangle_{W^*W}\,,\qquad \forall (\testb,[\testd])\in W\,,
\end{aligned}
\end{equation}
in the distributional sense. By \cite[Theorem 23.A]{Zeidler:1990}, the problem \eqref{eq:problem} is well-posed and
it is not difficult to see that its weak formulation is given by
\begin{multline}\label{eq:weak8}
-\int_{\Omout_T} v\testb\partial_t\psi\di x\di t+\int_{\Omout_T}\dfint\nabla v\cdot\nabla \testb\psi\di x\di t
+\int_{\Omout_T}\dfint\nabla u\cdot\nabla\testb\psi\di x\di t
\\
+\int_{\Omout_T}(\dfint+\dfout)\nabla u\cdot\nabla \testduno\psi \di x\di t
+\int_{\Omout_T}\dfint\nabla v\cdot\nabla \testduno\psi\di x\di t
\\
+\int_{\Omint_T}\dfdis\nabla u\cdot \nabla\testddue\psi\di x\di t
-\capuno\int_{\Memb_T}[u][\testd]\partial_t\psi\di\sigma\di t
+\capdue\int_{\Memb_T}[u][\testd]\psi\di\sigma\di t
\\
=\int_{\Omout_T}f_1\testb\psi\di x\di t
-\int_{\Omout_T}\dfint\nabla\widetilde u\cdot\nabla\testb\psi\di x\di t
+\int_{\Memb_T}q[\testd]\psi\di\sigma\di t\,,
\end{multline}
for every $(\testb,[\testd])\in W$ and every $\psi\in \CC^\infty_0(0,T)$,
where $\testd$ inside $\Omout$ and $\Omint$ is defined as the solution of
\eqref{eq:PDEuno}--\eqref{eq:Bound}, starting from $\testb\in H^1_{null}(\Omout)$ and $[\testd]\in H^{1/2}_0(\Memb,\Om)$.
Clearly, \eqref{eq:weak8} shall be complemented with the initial conditions.

Notice that \eqref{eq:weak8} formally coincides with \eqref{eq:weak1};
however, in \eqref{eq:weak1} the test function $\testd$ is a generic function belonging to $L^2(0,T;\X^1_0(\Om))$, while in the present
case it is the solution of an assigned differential problem.
Hence, in order to state that, actually, the two weak formulations are equivalent, we have to prove that we can replace
the prescribed $\testd$ in \eqref{eq:weak8} with a generic test function belonging to $\X^1_0(\Om)$.
To this purpose, let us fix $[\testd]\in H^{1/2}_0(\Memb,\Om)$
and choose two generic functions $\varphi^1\in H^1_{null}(\Omout)$ and $\varphi^2\in H^1_{null}(\Omint)$, such that
$[\varphi]:=\varphi^1-\varphi^2=[\testd]$. By \eqref{eq:weak7}, with $\phi=\testd-\varphi\in H^1_0(\Om)$, we get
\begin{multline}\label{eq:weak10}
\int_{\Omout_T}(\dfint+\dfout)\nabla u\cdot\nabla \testduno\psi \di x\di t
+\int_{\Omout_T}\dfint\nabla v\cdot\nabla \testduno\psi\di x\di t
+\int_{\Omint_T}\dfdis\nabla u\cdot \nabla\testddue\psi\di x\di t
\\
=\int_{\Omout_T}(\dfint+\dfout)\nabla u\cdot\nabla \varphi^1\psi \di x\di t
+\int_{\Omout_T}\dfint\nabla v\cdot\nabla \varphi^1\psi\di x\di t
+\int_{\Omint_T}\dfdis\nabla u\cdot \nabla\varphi^2\psi\di x\di t
\,.
\end{multline}
Hence, replacing \eqref{eq:weak10} in \eqref{eq:weak8}, it follows that it is possible to take $\testd\in \X^1_0(\Om)$
arbitrarily in \eqref{eq:weak8} and, thus, such a weak formulation coincides with \eqref{eq:weak1}, once we take into account
the density of product functions in $L^2(0,T;H^1_{null}(\Omout))$ and in $L^2(0,T;\X^1_0(\Om))$.
\end{proof}

\begin{prop}\label{p:p3}
Assume that $\dfint,\dfout,\dfdis,\capuno,\capdue,f_1,f_2,\overline v_0, s_0$ are as in Subsection \ref{ss:position}.
Assume that $\Iion\equiv 0$.
Then, problem \eqref{eq:PDEinc}--\eqref{eq:InitData3c} admits a unique solution $(V,U)\in L^2(0,T;H^1_{null}(\Omout))\times
L^2(0,T;\X^1_0(\Om))$, such that $V\in \CC^0([0,T];L^2(\Omout))$ and $[U]\in\CC^0([0,T];L^2(\Memb))$.
\end{prop}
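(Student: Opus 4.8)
The plan is to recover the solution of \eqref{eq:PDEinc}--\eqref{eq:InitData3c} (with $\Iion\equiv0$) from the solution of the shifted problem \eqref{eq:PDEins}--\eqref{eq:InitData3s} produced by Theorem \ref{p:p2}, undoing the ``source--moving'' that was performed through the auxiliary function $\widetilde u$ and the datum $q$. Let $\widetilde u$ be the unique solution of \eqref{eq:pde1}--\eqref{eq:pde3}, extended by zero to $\Omint$, let $q$ be as in \eqref{eq:a8}, and let $(v,u)\in L^2(0,T;H^1_{null}(\Omout))\times L^2(0,T;\X^1_0(\Om))$, with $v\in\CC^0([0,T];L^2(\Omout))$ and $[u]\in\CC^0([0,T];L^2(\Memb))$, be the unique solution of \eqref{eq:PDEins}--\eqref{eq:InitData3s} given by Theorem \ref{p:p2}. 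I would then set
$$
V:=v\,,\qquad U:=u+\widetilde u
$$
(so that $U=u$ in $\Omint$, since $\widetilde u\equiv0$ there) and claim that $(V,U)$ is the desired solution.

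The main task is to check that $(V,U)$ solves \eqref{eq:PDEinc}--\eqref{eq:InitData3c} with $\Iion\equiv0$. Most of it is a plain substitution: \eqref{eq:PDEinc} follows from \eqref{eq:PDEins} by absorbing $\Div(\dfint\nabla\widetilde u)$ into $\Div(\dfint\nabla U)$; \eqref{eq:PDEoutc} follows from \eqref{eq:PDEouts} together with \eqref{eq:pde1}, which is exactly $-\Div((\dfint+\dfout)\nabla\widetilde u)=f_1-f_2$; equations \eqref{eq:PDEdisc}, \eqref{eq:BoundDatac} and \eqref{eq:InitData1c} are untouched because $\widetilde u\equiv0$ in $\Omint$, $\widetilde u=0$ on $\partial\Om$, and $V=v$; \eqref{eq:flux1c} follows from \eqref{eq:flux1s} and \eqref{eq:pde2}; \eqref{eq:InitData3c} follows from \eqref{eq:InitData3s} since $[U](0)=[u](0)+[\widetilde u](0)=s_0$; and \eqref{eq:Circuitc} follows from \eqref{eq:Circuits} because the definition \eqref{eq:a8} of $q$ cancels the extra term $\capuno\partial_t[\widetilde u]+\capdue[\widetilde u]$. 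The one point that is not purely mechanical is the interface condition \eqref{eq:flux2c}, $\dfout\nabla U\cdot\nu=\dfdis\nabla U\cdot\nu$ on $\Memb_T$: since $U=u+\widetilde u$ in $\Omout$ and $U=u$ in $\Omint$, by \eqref{eq:flux2s} this reduces to $\dfout\nabla\widetilde u\cdot\nu=0$ on $\Memb_T$, which is where the assumption from Subsection \ref{ss:position} that $\dfint,\dfout$ are \emph{scalar} functions bounded below by $\const_0>0$ is used: the boundary condition \eqref{eq:pde2}, $\dfint\nabla\widetilde u\cdot\nu=0$, then forces $\nabla\widetilde u\cdot\nu=0$, hence $\dfout\nabla\widetilde u\cdot\nu=0$, on $\Memb_T$. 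To make all of this rigorous I would carry out the verification at the level of the weak formulations: substituting $V=v$ and $U=u+\widetilde u$ into \eqref{eq:weak5} (with the $\Iion$ term dropped) and using the weak form of \eqref{eq:pde1}--\eqref{eq:pde3} together with \eqref{eq:a8}, the contributions of $\widetilde u$ and $q$ telescope and one recovers exactly \eqref{eq:weak1}, which holds for $(v,u)$.

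The regularity asserted in the statement is then immediate: $V=v\in\CC^0([0,T];L^2(\Omout))$ by Theorem \ref{p:p2}, while $[U]=[u]+[\widetilde u]$ with $[u]\in\CC^0([0,T];L^2(\Memb))$ by Theorem \ref{p:p2} and $[\widetilde u]\in\CC^0([0,T];H^{1/2}(\Memb))\subset\CC^0([0,T];L^2(\Memb))$, since $\widetilde u\in H^1(0,T;H^1_{null}(\Omout))\hookrightarrow\CC^0([0,T];H^1_{null}(\Omout))$ and one takes traces on $\Memb$. For uniqueness I would argue in reverse: if $(V_1,U_1)$ and $(V_2,U_2)$ both solve \eqref{eq:PDEinc}--\eqref{eq:InitData3c} with $\Iion\equiv0$, then $(V_i,U_i-\widetilde u)$ both solve \eqref{eq:PDEins}--\eqref{eq:InitData3s}, so by the uniqueness part of Theorem \ref{p:p2} they coincide, whence $V_1=V_2$ and $U_1=U_2$; alternatively, one may invoke the uniqueness result of Section \ref{s:threeD_problem}, valid for any Lipschitz $\Iion$ and in particular for $\Iion\equiv0$.

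The step I expect to be the real obstacle is precisely the interface condition \eqref{eq:flux2c}: one has to notice that the scalar-coefficient hypothesis is exactly what upgrades \eqref{eq:pde2} to $\dfout\nabla\widetilde u\cdot\nu=0$ (equivalently $(\dfint+\dfout)\nabla\widetilde u\cdot\nu=0$), and to keep track of the $\Memb_T$-integrals generated by $\widetilde u$ and $q$ when passing between the weak formulations \eqref{eq:weak1} and \eqref{eq:weak5}; everything else is a direct substitution.
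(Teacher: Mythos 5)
Your proposal is correct and takes essentially the same route as the paper: the paper likewise sets $V=v$, $U=u+\widetilde u$ with $(v,u)$ from Theorem \ref{p:p2}, verifies the claim at the level of the weak formulations \eqref{eq:weak1} and \eqref{eq:weak5} using the identity \eqref{eq:a9} for $\widetilde u$ and the cancellation produced by $q$, and gets uniqueness from linearity. Your extra remark that the scalar-coefficient hypothesis is what makes \eqref{eq:pde2} equivalent to the natural Neumann condition $(\dfint+\dfout)\nabla\widetilde u\cdot\nu=0$ (hence yields \eqref{eq:flux2c}, resp.\ \eqref{eq:a9}) is a correct and slightly more explicit account of a point the paper leaves implicit.
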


\begin{proof}
Uniqueness easily follows by the linearity of problem \eqref{eq:PDEinc}--\eqref{eq:InitData3c}. In order to prove existence, set
$V=v$ and $U=u+\widetilde u$, where $\widetilde u\in H^1(0,T;\X^1_0(\Om))$ is the solution of problem \eqref{eq:pde1}--\eqref{eq:pde3} and
the pair $(v,u)\in L^2(0,T;H^1_{null}(\Omout))\times L^2(0,T;\X^1_0(\Om))$, with $v\in \CC^0([0,T];L^2(\Omout))$
and $[u]\in \CC^0([0,T];L^2(\Memb))$,
is the solution of \eqref{eq:PDEins}--\eqref{eq:InitData3s},
whose existence is guaranteed by Proposition \ref{p:p2}. Then, by \eqref{eq:weak1},
we get
\begin{multline}\label{eq:weak4}
-\int_{\Omout_T} V\partial_t\testb\di x\di t+\int_{\Omout_T}\dfint\nabla V\cdot\nabla \testb\di x\di t
+\int_{\Omout_T}\dfint\nabla U\cdot\nabla\testb\di x\di t
\\
+\int_{\Omout_T}(\dfint+\dfout)\nabla U\cdot\nabla \testduno \di x\di t
-\int_{\Omout_T}(\dfint+\dfout)\nabla \widetilde u\cdot\nabla \testduno \di x\di t
+\int_{\Omout_T}\dfint\nabla V\cdot\nabla \testduno\di x\di t
\\
+\int_{\Omint_T}\dfdis\nabla U\cdot \nabla\testddue\di x\di t
-\capuno\int_{\Memb_T}[U] \partial_t[\testd]\di\sigma\di t
+\capdue\int_{\Memb_T}[U][\testd]\di\sigma\di t
\\
=\int_{\Omout_T}f_1\testb\di x\di t\,,
\end{multline}
for every $\testb\in L^2(0,T;H^1_{null}(\Omout))\cap H^1_0(0,T;L^2(\Omout))$,
$\testd\in L^2(0,T;\X^1_0(\Om))$, where, as before, $[\testd]=\testduno-\testddue$ and $[\testd]\in H^1_0(0,T;L^2(\Memb))$.
Recalling \eqref{eq:weak5}, the thesis is achieved, up to an integration in time, once we have taken into account that
\begin{equation}\label{eq:a9}
\int_{\Omout_T}(\dfint+\dfout)\nabla \widetilde u\cdot\nabla \testduno \di x\di t
=\int_{\Omout_T}(f_1-f_2)\testduno\di x\di t\,,
\end{equation}
as follows from \eqref{eq:pde1}--\eqref{eq:pde3}.
\end{proof}

As a consequence of the previous results, we finally get our main theorem.

\begin{thm}\label{t:t1}
Assume that $\dfint,\dfout,\dfdis,\capuno,\capdue,f_1,f_2,\overline v_0, s_0$, and $ \Iion$ satisfy the assumptions stated
in Subsection \ref{ss:position}.
Then, problem \eqref{eq:PDEinc}--\eqref{eq:InitData3c} admits a unique solution $(V,U)\in L^2(0,T;H^1_{null}(\Omout))\times
L^2(0,T;\X^1_0(\Om))$, such that $V\in \CC^0([0,T];L^2(\Omout))$ and $[U]\in\CC^0([0,T];L^2(\Memb))$.
\end{thm}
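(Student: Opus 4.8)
The plan is to deduce Theorem~\ref{t:t1} from Proposition~\ref{p:p3} by treating $\Iion$ as a nonlinear perturbation and running a fixed-point argument; uniqueness is already available (see the uniqueness result established in Section~\ref{s:threeD_problem}), so only existence has to be proved. Given $\bar p\in L^2(\Omout_T)$, we first solve the gating problem \eqref{eq:gating1}--\eqref{eq:gating2} with $p=\bar p$, obtaining $\widetilde w_{\bar p}\in H^1(0,T;L^\infty(\Omout))$ and hence $\Iion(\bar p,\widetilde w_{\bar p})\in L^2(\Omout_T)$. We then solve the linear problem of Proposition~\ref{p:p3} with the source $f_1$ in \eqref{eq:PDEinc} replaced by $f_1-\Iion(\bar p,\widetilde w_{\bar p})$, leaving $f_2$ untouched; at the abstract level of Theorem~\ref{p:p2} this amounts to keeping the auxiliary function $\widetilde u$ of \eqref{eq:pde1}--\eqref{eq:pde3}, the datum $q$ of \eqref{eq:a8}, and the bilinear form $a$ of \eqref{eq:bilin} exactly as they are (they depend only on $f_1-f_2$ and on the geometry), and merely replacing the functional $B$ appearing in the abstract problem \eqref{eq:problem} by $B_{\bar p}\in L^2(0,T;W^*)$, with $\langle B_{\bar p},(\testb,[\testd])\rangle_{W^*W}=\int_{\Omout}\big(f_1-\Iion(\bar p,\widetilde w_{\bar p})\big)\testb\di x-\int_{\Omout}\dfint\nabla\widetilde u\cdot\nabla\testb\di x+\int_{\Memb}q[\testd]\di\sigma$. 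Since $a$ is continuous and coercive (Proposition~\ref{p:p4}) and $B_{\bar p}\in L^2(0,T;W^*)$, \cite[Theorem~23.A]{Zeidler:1990} provides a unique solution, and the arguments of Theorem~\ref{p:p2} and Proposition~\ref{p:p3} then yield a pair $(V_{\bar p},U_{\bar p})\in L^2(0,T;H^1_{null}(\Omout))\times L^2(0,T;\X^1_0(\Om))$, with $V_{\bar p}\in\CC^0([0,T];L^2(\Omout))$ and $[U_{\bar p}]\in\CC^0([0,T];L^2(\Memb))$, satisfying the weak formulation \eqref{eq:weak5} with $\Iion(\bar p,\widetilde w_{\bar p})$ in the place of $\Iion(V,\widetilde w)$. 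We set $\Phi(\bar p):=V_{\bar p}$; since $V_{\bar p}\in\CC^0([0,T];L^2(\Omout))\subset L^2(\Omout_T)$, $\Phi$ maps $L^2(\Omout_T)$ into itself, and a fixed point $p=\Phi(p)$ is exactly a solution of \eqref{eq:PDEinc}--\eqref{eq:InitData3c} coupled with \eqref{eq:gating1}--\eqref{eq:gating2}, because then $\widetilde w_{\bar p}$ is the gating variable associated with $p=V=\ubuno-\ubdue$.

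The remaining task is to show that $\Phi$ admits a fixed point. Given $\bar p_1,\bar p_2$, the difference $(\mathcal V,\mathcal U):=(V_{\bar p_1}-V_{\bar p_2},U_{\bar p_1}-U_{\bar p_2})$ solves the abstract problem \eqref{eq:problem} with zero initial data and right-hand side $B_{\bar p_1}-B_{\bar p_2}$, whose action is $-\int_{\Omout}\big(\Iion(\bar p_1,\widetilde w_{\bar p_1})-\Iion(\bar p_2,\widetilde w_{\bar p_2})\big)\testb\di x$ (note that $U_{\bar p_1}-U_{\bar p_2}=u_{\bar p_1}-u_{\bar p_2}$, since $\widetilde u$ is common). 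Testing with $(\mathcal V,[\mathcal U])$, using the coercivity of $a$ and Young's inequality, we obtain
\begin{equation*}
\frac{\rm d}{{\rm d}t }\Big(\|\mathcal V\|^2_{L^2(\Omout)}+\capuno\|[\mathcal U]\|^2_{L^2(\Memb)}\Big)+\const\|(\mathcal V,[\mathcal U])\|^2_W\leq \big\|\Iion(\bar p_1,\widetilde w_{\bar p_1})-\Iion(\bar p_2,\widetilde w_{\bar p_2})\big\|^2_{L^2(\Omout)}+\|\mathcal V\|^2_{L^2(\Omout)}\,,
\end{equation*}
while the computations behind \eqref{eq:ion1}, carried out pointwise in $t$ (comparison for the gating ODE, then the Lipschitz continuity and boundedness of $h_1,h_2$ and of $\widetilde w$), give $\big\|\Iion(\bar p_1,\widetilde w_{\bar p_1})-\Iion(\bar p_2,\widetilde w_{\bar p_2})\big\|^2_{L^2(\Omout)}(t)\leq\const\big(\|(\bar p_1-\bar p_2)(t)\|^2_{L^2(\Omout)}+\int_0^t\|(\bar p_1-\bar p_2)(\tau)\|^2_{L^2(\Omout)}\di\tau\big)$. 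Combining these with Gronwall's inequality and integrating in time yields $\|\mathcal V\|^2_{L^2(\Omout_T)}\leq\const(T)\|\bar p_1-\bar p_2\|^2_{L^2(\Omout_T)}$. To upgrade this to a contraction we would either replace the norm of $L^2(\Omout_T)$ by the equivalent weighted norm $\|p\|_\lambda^2:=\int_0^T e^{-\lambda t}\|p(t)\|^2_{L^2(\Omout)}\di t$ and check, through the same estimates, that $\Phi$ is a strict contraction for $\lambda$ large, or first solve on a short interval $[0,T_0]$ with $T_0$ depending only on the structural constants and then iterate finitely many times over $[0,T]$. In either case Banach's fixed-point theorem furnishes the solution, whose regularity and energy bound \eqref{eq:energy3} are inherited from Proposition~\ref{p:p3}.

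The main point requiring care — indeed the only genuinely delicate one — will be organizational rather than analytical: the nonlinear term must be inserted so that it does not touch the pieces $\widetilde u$, $q$ and the bilinear form $a$ that encode the degeneracy and the imperfect transmission conditions, so that Proposition~\ref{p:p3} can be invoked verbatim with the merely $L^2$ source $f_1-\Iion(\bar p,\widetilde w_{\bar p})$; and the integrated Lipschitz estimate \eqref{eq:ion1} must be sharpened to the pointwise-in-$t$ form used above in order to close the Gronwall argument. Global (rather than merely local in time) solvability then comes for free, since \eqref{eq:ion1} is a global Lipschitz bound.
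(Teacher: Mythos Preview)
Your proof is correct and rests on the same underlying idea as the paper: treat $\Iion$ as a Lipschitz perturbation of the linear problem solved in Proposition~\ref{p:p3}. The paper's own argument, however, is much terser: after observing that the gating equation \eqref{eq:gating1}--\eqref{eq:gating2} is affine in $\widetilde w$ and can be solved explicitly in terms of $V$, it packages the composed map $V\mapsto h(V):=\Iion(V,\widetilde w_V)$ as a globally Lipschitz nonlinearity (via \eqref{eq:ion1}) and simply invokes the abstract theory of semilinear evolution equations from \cite[Section~4.3]{Cazenave:Haraux:1998} (see also \cite[Ch.~6, Theorem~1.2]{Pazy:2012}), applied to the abstract setting \eqref{eq:problem} already built in Theorem~\ref{p:p2}. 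Your explicit Banach fixed-point argument on $\Phi:\bar p\mapsto V_{\bar p}$ is precisely what those abstract results carry out under the hood, so the two routes are equivalent in spirit. The merit of your version is that it is self-contained and makes the Volterra (nonlocal-in-time) character of $h$ transparent: your pointwise-in-$t$ sharpening of \eqref{eq:ion1} together with the weighted-norm or short-time iteration is exactly what is needed to close the contraction, whereas a bare citation of standard semilinear theory (typically stated for Nemytskii nonlinearities acting pointwise in $t$) leaves this step implicit. The paper's route buys brevity; yours buys transparency. Your observation that the perturbation enters only through the functional $B$ in \eqref{eq:problem}, leaving $\widetilde u$, $q$ and the bilinear form $a$ untouched, is the key organizational point, and you have it right.
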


\begin{proof}
The proof can be obtained following the same approach as in \cite{CDP2} (see, also, \cite[\S 2.4.1]{Davidovic:2016}).
Indeed, recalling that the function $g$ appearing in the gating equation \eqref{eq:gating1} is affine with respect to its second entry,
problem \eqref{eq:gating1}--\eqref{eq:gating2}
can be explicitly solved in term of $\ubuno-\ubdue=V$. Therefore, denoting by $\widetilde w_V$ such a solution and by
$h(V):=\Iion(V,\widetilde w_V)$, we obtain that problem \eqref{eq:PDEinc}--\eqref{eq:InitData3c} is a nonlinear version of the
problem considered in Proposition \ref{p:p3}. Moreover, since the nonlinearity $h$ satisfies the assumptions
\cite[Definition 4.3.1]{Cazenave:Haraux:1998},
the thesis follows by the results in \cite[Section 4.3]{Cazenave:Haraux:1998}
(see, also, \cite[Ch. 6, Theorem 1.2]{Pazy:2012}).
\end{proof}


\begin{thebibliography}{10}

\bibitem{Amar:Andreucci:Bisegna:Gianni:2005}
M.~Amar, D.~Andreucci, P.~Bisegna, and R.~Gianni.
\newblock Existence and uniqueness for an elliptic problem with evolution
  arising in electrodynamics.
\newblock {\em Nonlinear Anal. Real World Appl.}, 6:367--380, 2005.

\bibitem{Amar:Andreucci:Timofte:2020}
M.~Amar, D.~Andreucci, and C.~Timofte.
\newblock Homogenization of a modified bidomain model involving imperfect
  transmission.
\newblock 2020, Submitted.

\bibitem{BK}
M.~Bendahmane and H.~K. Karlsen.
\newblock Analysis of a class of degenerate reaction– diffusion systems and
  the bidomain model of cardiac tissue.
\newblock {\em Netw. Heterog. Media}, 1:185--218, 2006.

\bibitem{Boulakia:2015}
M.~Boulakia.
\newblock Etude math\'{e}matique et num\'{e}rique de mod\`{e}les issus du
  domaine biom\'{e}dical.
\newblock Equations aux d\'{e}riv\'{e}es partielles [math.AP]. UPMC, 2015.

\bibitem{Boulakia-2010}
M.~Boulakia, S.~Cazeau, M.~A. Fern\'{a}ndez, J.~F. Gerbeau, and N.~Zemzemi.
\newblock Mathematical modeling of electrocardiograms: a numerical study.
\newblock {\em Ann. Biomed. Eng.}, (3)38:1071--1097, 2010.

\bibitem{Boulakia:Fernandez:Gerbeau:Zemzemi:2008}
M.~Boulakia, M.~A. Fern\'{a}ndez, J.~F. Gerbeau, and N.~Zemzemi.
\newblock A coupled system of pdes and odes arising in electrocardiograms
  modelling.
\newblock {\em Applied Mathematics Research eXpress, Vol. 2008, Article ID
  abn002, 28 pages. DOI:10.1093/amrx/abn002}.

\bibitem{Boulakia-2007-1}
M.~Boulakia, M.~A. Fern\'{a}ndez, J.~F. Gerbeau, and N.~Zemzemi.
\newblock Towards the numerical simulation of electrocardiograms.
\newblock In F.~Sachse and G.~Seemann, editors, {\em Functional Imaging and
  Modeling of the Heart. FIMH 2007. In Lecture Notes in Computer Science, vol.
  4466}, pages 240--249. Springer, Berlin, 2007.

\bibitem{Bourgault:Coudiere:Pierre:2009}
Y.~Bourgault, Y.~Coudi\`{e}re, and C.~Pierre.
\newblock Existence and uniqueness of the solution for the bidomain model used
  in cardiac electrophysiology.
\newblock {\em Nonlinear Anal. Real World Appl.}, (1)10:458--482, 2009.

\bibitem{Cazenave:Haraux:1998}
T.~Cazenave and A.~Haraux.
\newblock {\em An introduction to semilinear evolution equations}, volume~13 of
  {\em Oxford lecture series in mathematics and its applications}.
\newblock Oxford University Press, New York, 1998.

\bibitem{CBetal}
R.~Clayton, O.~Bernus, E.~Cherry, H.~Dierckx, F.~Fenton, L.~Mirabella,
  A.~Panfilov, F.~Sachse, S.~G., and H.~Zhang.
\newblock Models of cardiac tissue electrophysiology: progress, challenges and
  open questions.
\newblock {\em Progress in Biophysics and Molecular Biology}, 104:22--48, 2011.

\bibitem{Collin:Imperiale:2018}
A.~Collin and S.~Imperiale.
\newblock Mathematical analysis and $2$-scale convergence of an heterogeneous
  microscopic bidomain model.
\newblock {\em Math. Models Meth. Appl. Sci.}, (5)28:979--1035, 2018.

\bibitem{CDP1}
Y.~Coudi\`{e}re, A.~Davidovic, and C.~Poignard.
\newblock The modified bidomain model with periodic diffusive inclusions.
\newblock In A.~Murray, editor, {\em in Computing in Cardiology Conference
  (CinC)}, pages 1033--1036. IEEE,
  https://ieeexplore.ieee.org/abstract/document/7043222, 2014.

\bibitem{CDP2}
Y.~Coudi\`{e}re, A.~Davidovic, and C.~Poignard.
\newblock Modified bidomain model with passive periodic heterogeneities.
\newblock {\em DCDS, Series S}, 2019, DOI:10.3934/dcdss.2020126.

\bibitem{Davidovic:2016}
A.~Davidovi\`{c}.
\newblock {\em Multiscale mathematical modelling of structural heterogeneities
  in cardiac electrophysiology}.
\newblock General Mathematics [math.GM]. Universit\`{e} de Bordeaux,
  NNT:2016BORD0448, 2016.

\bibitem{Grandelius:2019}
E.~Grandelius and K.H.~Karlsen.
\newblock The cardiac bidomain model and homogenization.
\newblock {\em Netw. Heterog. Media}, 14(1):173--204, 2019.



\bibitem{JPr}
C.~Jerez-Hanckes, I.~Pettersson, and V.~Rybalko.
\newblock Derivation of cable equation by multiscale analysis for a model of
  myelinated axons.
\newblock {\em DCDS, Series B}, 25(3):815--839, 2020.

\bibitem{K2020}
N.~Kajiwara.
\newblock On the bidomain equations as parabolic evolution equations.
\newblock {\em Preprint}, 2020.

\bibitem{KLPW}
O.~Kavian, M.~Legu\`{e}be, C.~Poignard, and L.~Weynans.
\newblock ``\textsc{C}lassical'' electropermeabilization modeling at the cell scale.
\newblock {\em J. Math. Biol.}, 68:235--265, 2014.


\bibitem{KS}
J.~Keener and J.~Sneyd.
\newblock {\em Mathematical physiology}.
\newblock Springer, 2004.

\bibitem{NK}
W.~Krassowska and J.~Neu.
\newblock Homogenization of syncytial tissues.
\newblock {\em Critical Reviews in Biomedical Engineering}, 21:137--199, 1992.

\bibitem{Nagumo:Arimoto:Yoshizawa:1962}
J.~Nagumo, S.~Arimoto, and S.~Yoshizawa.
\newblock An active pulse transmission line simulating nerve axon.
\newblock {\em Proc. Institute of Radio Engineers}, 50:2061--2070, 1962.

\bibitem{Pazy:2012}
A.~Pazy.
\newblock {\em Semigroups of linear operators and applications to partial
  differential equations}, volume~44.
\newblock Springer Science \& Business Media, 2012.

\bibitem{Pennacchio:Savare:Franzone:2005}
M.~Pennacchio, G.~Savar\'{e}, and P.~C. Franzone.
\newblock Multiscale modeling for the bioelectric activity of the heart.
\newblock {\em SIAM J. Math. Anal.}, (4)37:1333--1370, 2005.

\bibitem{Veneroni:2009}
M.~Veneroni.
\newblock Reaction-diffusion systems for the macroscopic bidomain model of the
  cardiac electric field.
\newblock {\em Nonlinear Anal. Real World Appl.}, 10:849--868, 2009.

\bibitem{Zeidler:1990}
E.~Zeidler.
\newblock {\em Nonlinear functional analysis and its applications}, volume
  II/A.
\newblock Springer-Verlag, Berlin, 1990.

\bibitem{Zemzemi:2009}
N.~Zemzemi.
\newblock Theoretical and numerical study of the electric activity of the
  heart. \textsc{M}odeling and numerical simulation of electrocardiograms.
\newblock Mathematics [math]. Universit\'{e} Paris Sud- Paris XI, 2009.
  English.

\end{thebibliography}
\end{document}